\sloppy\pagestyle{plain}
\theoremstyle{definition}
\newtheorem{definition}[equation]{Definition}
\newtheorem{theorem}[equation]{Theorem}
\newtheorem{lemma}[equation]{Lemma}
\newtheorem{corollary}[equation]{Corollary}
\newtheorem{conjecture}[equation]{Conjecture}
\newtheorem{question}[equation]{Question}
\newtheorem*{question*}{Question}
\newtheorem*{problem*}{Problem}
\theoremstyle{remark}
\newtheorem{remark}[equation]{Remark}
\def\C {\mathbb{C}}
\def\Z {\mathbb{Z}}
\def\P {\mathbb{P}}
\def\OO {\mathcal{O}}
\def\GL {\mathrm{GL}}
\def\SL {\mathrm{SL}}
\def\A {\mathrm{A}}
\def\SS {\mathrm{S}}
\def\lct {\mathrm{lct}}
\def\Sym {\mathrm{Sym}}
\def\le {\leqslant}
\def\ge {\geqslant}
\author{Ivan Cheltsov and Constantin Shramov}
\title{Nine-dimensional exceptional quotient singularities exist}
\address{University of Edinburgh, Edinburgh EH9 3JZ, UK, \texttt{I.Cheltsov@ed.ac.uk}}
\address{Steklov Institute of Mathematics, Moscow 119991, Russia, \texttt{shramov@mccme.ru}}
\address{Laboratory of Algebraic Geometry, GU-HSE, 7 Vavilova street, Moscow 117312, Russia}%
\thanks{We assume that all varieties are projective, normal, and defined over $\mathbb{C}$.}
\begin{document}

\begin{abstract}
We prove that nine-dimensional exceptional quotient singularities
exist.
\end{abstract}

\maketitle

The~Fubini--Studi metric on $\mathbb{P}^{n}$ is known to be
K\"ahler--Einstein. Moreover, it follows from~\cite{BandoMabuchi}
that every K\"ahler--Einstein metric on $\mathbb{P}^{n}$ is a pull
back of the~Fubini--Studi metric (possibly multiplied by a
positive real constant) via some automorphism of $\P^n$. However,
there are plenty of non-K\"ahler--Einstein metrics
on~$\mathbb{P}^{n}$ whose K\"ahler forms lie in
$\mathrm{c}_1(\P^n)$. Let $g=g_{i\overline{j}}$ be such a metric
with a K\"ahler form $\omega$. Then one can try to obtain the
K\"ahler--Einstein metric on $\P^n$ out of the metric $g$ by
taking the normalized K\"ahler--Ricci iterations defined by
\begin{equation}
\label{equation:iteration} \left\{\aligned
&\omega_{i-1}=\mathrm{Ric}\big(\omega_{i}\big),\\
&\omega_{0}=\omega,\\
\endaligned
\right.
\end{equation}
where $\omega_{i}$ is a~K\"ahler form such that
$\omega_{i}\in\mathrm{c}_1(\P^n)$. Indeed, it follows from
\cite{Yau78} that the~solution~$\omega_{i}$ to
$(\ref{equation:iteration})$ exists for every $i\geqslant 1$.
However, it is not clear that any solution to
$(\ref{equation:iteration})$ converges to the K\"ahler form of the
K\"ahler--Einstein metric on $\P^n$ in the sense of
Cheeger--Gromov (see \cite{Rub08}). Nevertheless, this is known to
be true under an additional assumption that we are going to
describe.

Let $\bar{G}\subset\mathrm{Aut}(\P^n)$ be a~finite subgroup.
Suppose, in addition, that the metric $g$ is $\bar{G}$-invariant.
Let $\alpha_{\bar{G}}(\P^n)$ be the $\bar{G}$-invariant
$\alpha$-invariant of Tian of $\P^n$ that is introduced in
\cite{Ti87}.

\begin{theorem}[{\cite{Rub08}}]
\label{theorem:Yanir} If $\alpha_{\bar{G}}(\P^n)>1$, then
any~solution to $(\ref{equation:iteration})$ converges to the
K\"ahler form of the K\"ahler--Einstein metric on $\P^n$ in
$C^{\infty}(X)$-topology.
\end{theorem}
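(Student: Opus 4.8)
The plan is to rewrite $(\ref{equation:iteration})$ as a sequence of complex Monge--Amp\`ere equations for K\"ahler potentials and then to produce uniform a priori estimates on these potentials, the hypothesis $\alpha_{\bar{G}}(\P^n)>1$ entering decisively in the zeroth-order estimate. Fix a $\bar{G}$-invariant reference form $\omega\in\mathrm{c}_1(\P^n)$ and write $\omega_i=\omega+\sqrt{-1}\,\partial\bar\partial\varphi_i$, normalising $\sup_{X}\varphi_i=0$. Since $\mathrm{Ric}(\omega_i)=\mathrm{Ric}(\omega)-\sqrt{-1}\,\partial\bar\partial\log(\omega_i^n/\omega^n)$, the relation $\omega_{i-1}=\mathrm{Ric}(\omega_i)$ is equivalent, after fixing the additive constant by the volume condition $\int_X\omega_i^n=\int_X\omega^n$, to
\begin{equation*}
\omega_i^n=e^{h_\omega-\varphi_{i-1}+c_i}\,\omega^n,
\end{equation*}
where $h_\omega$ is the Ricci potential of $\omega$ and $c_i$ is a normalising constant. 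By Yau's theorem each such equation has a unique $\bar{G}$-invariant solution, so the iteration is well defined and stays inside the space of $\bar{G}$-invariant metrics.

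The hard part will be a uniform $C^0$ estimate for the $\varphi_i$, and this is exactly where the hypothesis $\alpha_{\bar{G}}(\P^n)>1$ --- rather than the weaker K\"ahler--Einstein existence threshold $\tfrac{n}{n+1}$ --- is indispensable. The crucial feature of the iteration is that the source $e^{-\varphi_{i-1}}$ enters the equation above with exponent exactly $1$. By the definition of the $\bar{G}$-invariant $\alpha$-invariant, the inequality $\alpha_{\bar{G}}(\P^n)>1$ supplies, for some $p>1$, a uniform bound
\begin{equation*}
\int_X e^{-p\varphi}\,\omega^n\le C
\end{equation*}
over all $\bar{G}$-invariant potentials $\varphi$ with $\sup_X\varphi=0$. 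Applying this to $\varphi_{i-1}$ shows first that the constants $c_i$ stay bounded and then that the densities of the measures $\omega_i^n$ are bounded in $L^p(\omega^n)$ uniformly in $i$; Ko\l{}odziej's (or Yau's) a priori estimate therefore gives a uniform bound $\|\varphi_i\|_{C^0}\le C$. An $\alpha$-invariant not exceeding $1$ would control $\int_X e^{-p\varphi}\,\omega^n$ only for $p<1$, which does not suffice for the Monge--Amp\`ere $C^0$ estimate; this is precisely why $1$ is the natural threshold. Feeding the $C^0$ bound into Yau's second- and third-order (Calabi) estimates yields uniform $C^k$ bounds for every $k$, so by Arzel\`a--Ascoli the $\omega_i$ lie in a $C^\infty$-compact set.

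Finally I would force convergence with the Ding-type functional
\begin{equation*}
F(\varphi)=-E(\varphi)-\log\left(\frac{1}{V}\int_X e^{h_\omega-\varphi}\,\omega^n\right),
\end{equation*}
where $E$ is the Monge--Amp\`ere (Aubin) energy and $V=\int_X\omega^n$; a direct computation using the concavity of the logarithm and the defining equation shows that $F$ is nonincreasing along the iteration, with equality only at a K\"ahler--Einstein potential. Since the $\varphi_i$ lie in a $C^\infty$-compact set, $F(\varphi_i)$ is bounded and monotone, hence convergent, so the decrements $F(\varphi_{i-1})-F(\varphi_i)\to 0$; as these decrements control $\varphi_i-\varphi_{i-1}$, any $C^\infty$ subsequential limit $\varphi_\infty$ satisfies $\omega_{\varphi_\infty}^n=e^{h_\omega-\varphi_\infty}\,\omega^n$, i.e. defines a K\"ahler--Einstein metric. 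By Bando--Mabuchi this metric is a pull-back of the Fubini--Studi metric, and since all iterates are $\bar{G}$-invariant it is the unique $\bar{G}$-invariant such metric; as every subsequence thus has a further subsequence converging to the same limit, the whole sequence $\omega_i$ converges to the K\"ahler--Einstein form in $C^\infty(X)$, as required.
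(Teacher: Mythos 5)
There is nothing in the paper to compare your argument against: Theorem~\ref{theorem:Yanir} is quoted from \cite{Rub08} without proof (the authors explicitly remark that Rubinstein's result is stronger and holds for any Fano manifold). Taken on its own terms, your sketch reconstructs the correct architecture of Rubinstein's argument: rewriting $\omega_{i-1}=\mathrm{Ric}(\omega_i)$ as the Monge--Amp\`ere iteration $\omega_i^n=e^{h_\omega-\varphi_{i-1}+c_i}\omega^n$, using $\alpha_{\bar{G}}(\P^n)>1$ to get a uniform $L^p$ bound with $p>1$ on the densities (hence bounded $c_i$ and a uniform $C^0$ bound), and closing the loop with the monotone Ding-type functional. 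Your identification of the exponent $1$ on $e^{-\varphi_{i-1}}$ as the reason the threshold is $1$ rather than $n/(n+1)$ is exactly right.

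Two steps are genuinely thinner than you present them. First, the higher-order estimates do not follow from the $C^0$ bound by a bare appeal to Yau's second- and third-order estimates: the density at step $i$ is $e^{h_\omega-\varphi_{i-1}+c_i}$, and Yau's Laplacian estimate requires a lower bound on the Laplacian of its logarithm, i.e.\ an upper bound on $\Delta_\omega\varphi_{i-1}$ --- which is precisely the second-order quantity you are trying to control, one step earlier. Applied naively this couples the estimates across iterations and the constants could a priori blow up with $i$; one must instead exploit the structural positivity $\mathrm{Ric}(\omega_i)=\omega_{i-1}>0$ and run a Chern--Lu/Aubin--Yau argument against the fixed reference metric, which decouples the steps. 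Second, your final paragraph silently assumes that the $\bar{G}$-invariant K\"ahler--Einstein metric on $\P^n$ is unique in order to promote subsequential convergence to convergence of the whole sequence. Since $\P^n$ carries a large continuous automorphism group, Bando--Mabuchi gives uniqueness only up to $\mathrm{Aut}(\P^n)$, and the $\bar{G}$-fixed locus in the space of K\"ahler--Einstein metrics is a point only when the connected centralizer of $\bar{G}$ is compact; this is a consequence of the properness of the functional on $\bar{G}$-invariant potentials supplied by $\alpha_{\bar{G}}(\P^n)>1>n/(n+1)$, but it has to be argued, not assumed.
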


It should be mentioned that the original result by Rubinstein
proved in \cite{Rub08} is much stronger that
Theorem~\ref{theorem:Yanir} and valid for any smooth complex
manifold with a positive first Chern class. Nevertheless, even in
the simplest possible case of $\P^n$, the assertion of
Theorem~\ref{theorem:Yanir} is still very not obvious. Thus, it is
natural to ask the following

\begin{question}[Rubinstein]
\label{question:Yanir} Is there a~finite subgroup
$\bar{G}\subset\mathrm{Aut}(\mathbb{P}^{n})$ such that
$\alpha_{\bar{G}}(\mathbb{P}^{n})>1$?
\end{question}

It came as a surprise that Question~\ref{question:Yanir} is
strongly related to the notion of exceptional singularity that was
introduced by Shokurov in \cite{Sho93}. Let us recall this notion.

\begin{definition}[Shokurov]
\label{definition:exceptional} Let $(V\ni O)$ be a~germ of
Kawamata log terminal singularity. Then $(V\ni O)$ is~said to be
\emph{exceptional} if for every effective $\mathbb{Q}$-divisor
$D_{V}$ on the~variety $V$ such that $(V,D_{V})$ is log canonical
and for every resolution of singularities $\pi\colon U\to V$ there
exists at most one $\pi$-exceptional divisor $E\subset
U$~such~that $a(V,D_{V},E)=-1$, where the rational number
$a(V,D_{V},E)$ can be defined through the equivalence
$$
K_{U}+D_{U}\sim_{\mathbb{Q}}\pi^{*}\Big(K_{V}+D_{V}\Big)+
\sum a\big(V,D_{V},F\big)F.%
$$
The sum above is taken over all $f$-exceptional divisors, and
$D_{U}$ is the~proper transform of the~divisor $D_{V}$ on
the~variety $U$.
\end{definition}

One can show that exceptional Kawamata log terminal singularities
are straightforward generalizations of the Du Val singularities of
type $\mathbb{E}_{6}$, $\mathbb{E}_{7}$ and $\mathbb{E}_{8}$ (see
\cite[Example~5.2.3]{Sho93}), which partially justifies the word
``exceptional'' in Definition~\ref{definition:exceptional}. It
follows from \cite[Theorem~1.16]{ChSh09} that exceptional Kawamata
log terminal singularities exist in every dimension. Surprisingly,
Question~\ref{question:Yanir} is \emph{almost} equivalent to the
following

\begin{question}
\label{question:exceptional} Are there exceptional \emph{quotient}
singularities of dimension $n+1$?
\end{question}

Recall that quotient singularities are always Kawamata
log~terminal. So Question~\ref{question:exceptional} fits well to
Definition~\ref{definition:exceptional}. It follows
from~\cite{Sho93}, \cite{MarPr99}, \cite{ChSh09}, and
\cite{ChSh10} that the answers to both
Questions~\ref{question:Yanir}~and~\ref{question:exceptional} are
positive for every $n\leqslant 5$ (see
Theorems~\ref{theorem:Vanya-Kostya-invariants} and
\ref{theorem:dime-6}). Moreover, it follows from \cite{ChSh10}
that the answers to both Questions~\ref{question:Yanir} and
\ref{question:exceptional} are ``surprisingly'' negative for
$n=6$. The purpose of this paper is to show that the answers to
both Questions~\ref{question:Yanir} and \ref{question:exceptional}
are again positive for $n=8$ by proving the following

\begin{theorem}
\label{theorem:main} Let $G$ be a finite subgroup in $\SL_9(\C)$
such that $G\cong 3^{1+4}:\mathrm{Sp}_{4}(3)$ (see \cite{Atlas}
for notation), let
$\phi\colon\SL_{9}(\mathbb{C})\to\mathrm{Aut}(\mathbb{P}^{8})$ be
the~natural projection. Put $\bar{G}=\phi(G)$. Then $4/3\geqslant
\alpha_{\bar{G}}(\mathbb{P}^{8})\geqslant 10/9$ and the
singularity $\C^9/G$ is exceptional.
\end{theorem}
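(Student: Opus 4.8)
The plan is to treat the lower bound, the upper bound, and the exceptionality of $\C^9/G$ as three linked tasks, with the lower bound carrying most of the weight and the exceptionality following from it. The common engine is the representation theory of $G\cong 3^{1+4}:\mathrm{Sp}_{4}(3)$: its nine-dimensional representation is irreducible and primitive, so $\bar{G}$ fixes no proper linear subspace of $\P^{8}$, and the sheer size of $G$ prevents short orbits and low-degree invariant subvarieties. Accordingly, the first step is bookkeeping with \cite{Atlas}: I would determine the minimal length of a $\bar{G}$-orbit on $\P^{8}$, the least dimension and degree of a proper $\bar{G}$-invariant subvariety, and the smallest degrees in which $G$-semi-invariant forms occur in $\Sym^{\bullet}\big((\C^{9})^{\vee}\big)$. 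These numbers are what ultimately pin down the thresholds $10/9$ and $4/3$.

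For the upper bound $\alpha_{\bar{G}}(\P^{8})\le 4/3$ it suffices to produce a single $\bar{G}$-invariant effective $\Q$-divisor $D\qlin -K_{\P^{8}}$, i.e.\ of degree $9$, with $\lct(\P^{8},D)\le 4/3$. I would build $D$ from the lowest-degree invariant hypersurface found above (or from an invariant subvariety of small codimension), rescale it into the anticanonical class, and then estimate $\mathrm{mult}$ along the worst center to certify the inequality. Once the invariant geometry is known this reduces to one explicit computation.

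The heart of the argument is $\alpha_{\bar{G}}(\P^{8})\ge 10/9$, which I would prove by contradiction. Assume some $\bar{G}$-invariant effective $\Q$-divisor $D\qlin -K_{\P^{8}}$ has $\lct(\P^{8},D)<10/9$, so that $(\P^{8},\tfrac{10}{9}D)$ is not log canonical, and let $Z$ be a minimal center of non-log-canonical singularities. Then $Z$ is $\bar{G}$-invariant, and by the Shokurov--Koll\'ar connectedness theorem together with Nadel vanishing its geometry is controlled. I would then eliminate $Z$ dimension by dimension --- finite orbits, invariant curves, invariant surfaces, and so on up to invariant divisors --- by combining the standard multiplicity inequalities forced by non-log-canonicity (inversion of adjunction and restriction to $Z$ or to general members through it) with the numerical data from the first step. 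Since $\bar{G}$ admits neither short orbits nor low-degree invariant subvarieties, the multiplicities that non-log-canonicity demands are incompatible with $\deg D=9$, which is the contradiction. The hard part will be exactly this case analysis: making the orbit-length and invariant-degree bounds sharp enough to beat the value $10/9$, with the low-dimensional centers (isolated orbits and invariant curves) the most delicate to rule out.

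Finally, the exceptionality of $\C^{9}/G$ is a consequence of the lower bound. Because $\alpha_{\bar{G}}(\P^{8})\ge 10/9>1$, every $\bar{G}$-invariant effective $\Q$-divisor $D\qlin -K_{\P^{8}}$ makes $(\P^{8},D)$ Kawamata log terminal, since the pair stays log canonical for a coefficient strictly exceeding $1$. Invoking the (almost-)equivalence between Question~\ref{question:Yanir} and Question~\ref{question:exceptional} alluded to above --- namely the criterion identifying exceptionality of $\C^{n+1}/G$ with this Kawamata log terminal property for all invariant anticanonical $\Q$-divisors --- I would conclude that $\C^{9}/G$ is exceptional. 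Thus no work beyond the bound $\alpha_{\bar{G}}(\P^{8})>1$ is needed for this last assertion.
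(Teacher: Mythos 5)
Your treatment of the upper bound and of the exceptionality statement matches the paper: $G$ has a semi-invariant of degree $12$ and none of degree $\leqslant 11$ (a GAP computation, Lemma~\ref{lemma:no-semiinvariants}), which gives $\lct(\P^8,\bar G)\leqslant 12/9=4/3$ directly from the definition, and exceptionality follows from the lower bound via Theorem~\ref{theorem:criterion} exactly as you say. The gap is in the core of the argument, the bound $\lct(\P^8,\bar G)\geqslant 10/9$. Your plan --- eliminate minimal centers dimension by dimension using orbit lengths, degrees of invariant subvarieties, and the multiplicity inequalities forced by non-log-canonicity --- is the template that works for $n\leqslant 4$, but it does not contain the mechanism that closes the argument here, and as stated it would fail: for a center of dimension $4$, $5$ or $6$ in $\P^8$, multiplicity bounds against $\deg D=9$ with $\lambda<10/9$ exclude essentially nothing, and ``the least degree of a proper $\bar G$-invariant subvariety of each dimension'' is not a quantity one can read off from the Atlas.

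What the paper actually does is different in kind. After passing to a minimal lc center $S$ with orbit $V$ and applying Kawamata subadjunction (so $V$ is normal with rational singularities and $K_V+B_V\sim_{\mathbb{Q}}(1-\nu)H$), the Nadel--Shokurov vanishing theorem forces $h^0(\mathcal{I}_V\otimes\OO_{\P^8}(m))={8+m\choose m}-H_V(m)$ for all $m\geqslant 1$, where $H_V$ is the Hilbert polynomial of $V$. The decisive constraint is representation-theoretic: $h_m=H_V(m)$ must lie in the set $\Sigma_m$ of partial sums of dimensions of irreducible constituents of $\Sym^m(U^{\vee})$ (computed in GAP for $m\leqslant 9$, Lemma~\ref{lemma:splitting}), supplemented by divisibility modulo $9$ coming from the action of $Z(G)$, by the ``binomial trick'' (Lemma~\ref{lemma:qivj}) bounding alternating sums of the $q_m$ via hyperplane sections, and by Riemann--Roch on the curve section. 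Even then the elimination requires a computer search, and two Hilbert polynomials survive ($n=4$, $d=36$ and $n=5$, $d=45$); these are killed only by analyzing the $G$-module structure of the multiplication maps $U^{\vee}\otimes U_{45}\to H^0(\mathcal{I}_V\otimes\OO_{\P^8}(5))$ and $U^{\vee}\otimes U_{24}\to H^0(\mathcal{I}_V\otimes\OO_{\P^8}(7))$ (Lemmas~\ref{lemma:9x45} and~\ref{lemma:9x24}). None of this machinery is recoverable from your outline, so the central inequality remains unproved in your proposal.
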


How to compute $\alpha_{\bar{G}}(\mathbb{P}^{n})$? How to show
that a given quotient singularity is exceptional? How
Questions~\ref{question:Yanir} and \ref{question:exceptional} are
related? How to prove Theorem~\ref{theorem:main}? What are the
expected answers to Questions~\ref{question:Yanir} and
\ref{question:exceptional} for $n=7$ and $n\geqslant 9$? Let us
give partial answers to these questions.

Let
$\phi\colon\GL_{n+1}(\mathbb{C})\to\mathrm{Aut}(\mathbb{P}^{n})$
be the~natural projection. Then there exists a~finite subgroup~$G$
in $\GL_{n+1}(\mathbb{C})$ such that $\phi(G)=\bar{G}$. Put
$$
\mathrm{lct}\Big(\mathbb{P}^{n},\bar{G}\Big)=\mathrm{sup}\left\{\lambda\in\mathbb{Q}\ \left|%
\aligned
&\text{the~log pair}\ \left(\mathbb{P}^{n}, \lambda D\right)\ \text{has log canonical singularities}\\
&\text{for every $\bar{G}$-invariant effective $\mathbb{Q}$-divisor}\ D\sim_{\mathbb{Q}} -K_{\mathbb{P}^{n}}\\
\endaligned\right.\right\}.%
$$

\begin{theorem}[{see e.\,g. \cite[Theorem~A.3]{ChSh08c}}]
\label{theorem:alpha} One has
$\mathrm{lct}(\mathbb{P}^{n},\bar{G})=\alpha_{\bar{G}}(\mathbb{P}^{n})$.
\end{theorem}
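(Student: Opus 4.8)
The plan is to recognize both $\mathrm{lct}(\mathbb{P}^{n},\bar{G})$ and $\alpha_{\bar{G}}(\mathbb{P}^{n})$ as thresholds for the same positivity/integrability phenomenon, and then to translate between the algebraic language of invariant $\mathbb{Q}$-divisors and the analytic language of invariant $\omega$-plurisubharmonic potentials. Recall that Tian's invariant is analytic in nature: fixing the Fubini--Studi form $\omega\in\mathrm{c}_1(\mathbb{P}^{n})$, the number $\alpha_{\bar{G}}(\mathbb{P}^{n})$ is the supremum of those $\alpha\geqslant 0$ for which there is a constant $C$ with $\int_{\mathbb{P}^{n}}e^{-\alpha(\varphi-\sup\varphi)}\,\omega^{n}\leqslant C$ for every $\bar{G}$-invariant $\omega$-psh function $\varphi$ (see \cite{Ti87}). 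The bridge between the two pictures is the theorem of Demailly--Kollár computing a log canonical threshold as an integrability exponent: if $\varphi$ is a local psh function with associated current $dd^{c}\varphi$, then the pair obtained from $\lambda\,dd^{c}\varphi$ is log canonical near a point $x$ exactly in the range where $e^{-\lambda\varphi}$ remains locally integrable at $x$. I would quote this, together with the effective semicontinuity estimates from the same circle of ideas, as collected in \cite[Appendix]{ChSh08c}.

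First I would prove $\mathrm{lct}(\mathbb{P}^{n},\bar{G})\geqslant\alpha_{\bar{G}}(\mathbb{P}^{n})$. Let $D\sim_{\mathbb{Q}}-K_{\mathbb{P}^{n}}$ be an effective $\bar{G}$-invariant $\mathbb{Q}$-divisor, and let $\varphi_{D}$ be a global $\omega$-psh potential for the current of integration along $D$, normalized and averaged over the finite group $\bar{G}$ so as to be $\bar{G}$-invariant. For any rational $\lambda<\alpha_{\bar{G}}(\mathbb{P}^{n})$ the defining integral bound applied to $\varphi_{D}$ yields $e^{-\lambda\varphi_{D}}\in L^{1}(\mathbb{P}^{n})$; since $\mathbb{P}^{n}$ is compact this forces local integrability at every point, and by Demailly--Kollár the log pair $(\mathbb{P}^{n},\lambda D)$ is log canonical. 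As $D$ and $\lambda<\alpha_{\bar{G}}(\mathbb{P}^{n})$ are arbitrary, the inequality follows.

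The reverse inequality $\alpha_{\bar{G}}(\mathbb{P}^{n})\geqslant\mathrm{lct}(\mathbb{P}^{n},\bar{G})$ is where the real work lies. Here I would begin with an arbitrary $\bar{G}$-invariant $\omega$-psh potential $\varphi$ and its positive current $T_{\varphi}=\omega+dd^{c}\varphi\in\mathrm{c}_1(\mathbb{P}^{n})$, and approximate $T_{\varphi}$ by currents of integration along effective $\bar{G}$-invariant $\mathbb{Q}$-divisors $D\sim_{\mathbb{Q}}-K_{\mathbb{P}^{n}}$. Since $\mathrm{Pic}(\mathbb{P}^{n})=\mathbb{Z}$, the cohomology class is automatically correct, and $\bar{G}$-invariance is preserved by averaging the Demailly regularization over $\bar{G}$. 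Writing $\lambda=\mathrm{lct}(\mathbb{P}^{n},\bar{G})$, each approximating pair $(\mathbb{P}^{n},\lambda' D)$ with $\lambda'<\lambda$ is Kawamata log terminal, so the corresponding $e^{-\lambda'\varphi_{D}}$ is integrable; passing to the limit via semicontinuity recovers integrability of $e^{-\lambda'\varphi}$ itself.

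The main obstacle is to make this last bound \emph{uniform} in $\varphi$, i.e.\ to produce a single constant $C$ valid simultaneously for all normalized $\bar{G}$-invariant $\omega$-psh potentials, as Tian's definition demands; together with the equivariant regularization producing genuine $\mathbb{Q}$-divisors in the fixed class while keeping $\bar{G}$-invariance, this is the analytic heart of the argument. I expect to obtain the uniform constant from the effective form of the Demailly--Kollár semicontinuity theorem combined with the compactness of the space of normalized $\bar{G}$-invariant $\omega$-psh functions in $L^{1}$. These upgrade the pointwise finiteness of the integrals to the required uniform bound, giving $\alpha_{\bar{G}}(\mathbb{P}^{n})\geqslant\mathrm{lct}(\mathbb{P}^{n},\bar{G})$ and hence the desired equality.
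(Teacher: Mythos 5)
The paper contains no proof of this statement: it is quoted from \cite[Theorem~A.3]{ChSh08c} (the appendix of that paper, due to Demailly), so the only meaningful comparison is with that source. Your outline follows essentially the same strategy as the cited proof: the Demailly--Koll\'ar identification of log canonical thresholds with complex singularity (integrability) exponents, the inequality $\lct(\P^n,\bar{G})\ge\alpha_{\bar{G}}(\P^n)$ obtained by feeding the potentials $\varphi_D$ of invariant divisors into Tian's integral, and the reverse inequality via equivariant approximation of invariant currents by invariant $\Q$-divisors together with the effective semicontinuity theorem and $L^1$-compactness of normalized invariant $\omega$-psh functions. All of these ingredients are indeed what \cite{ChSh08c} uses.

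There is, however, one step that as written goes the wrong way. In the hard direction you argue that klt-ness of the approximating pairs $(\P^n,\lambda' D)$ gives integrability of $e^{-\lambda'\varphi_D}$, and that ``passing to the limit via semicontinuity recovers integrability of $e^{-\lambda'\varphi}$ itself.'' Semicontinuity of the singularity exponent bounds the exponent of a limit from \emph{above} by the liminf of the exponents of the approximants; it never lets you deduce integrability of the limit from integrability of the approximants (take $\varphi_m=\max(\varphi,-m)$: each $e^{-\lambda'\varphi_m}$ is bounded, hence integrable, with no consequence for $\varphi$). Worse, the Demailly regularizations $\varphi_m$ of $\varphi$ satisfy $\varphi\le\varphi_m+O(1)$ locally, i.e.\ they are \emph{less} singular than $\varphi$, so their integrability alone says nothing about $e^{-\lambda'\varphi}$. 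What actually closes the argument in the cited proof is the quantitative two-sided estimate of the approximation theorem, $c(\varphi)\le c(\varphi_m)\le c(\varphi)+\varepsilon_m$ with $\varepsilon_m\to 0$, used by contraposition: if some invariant $\varphi$ had exponent $c(\varphi)<\lct(\P^n,\bar{G})$, then for $m\gg 0$ the invariant linear system defining $\varphi_m$ would produce an invariant effective $\Q$-divisor $D\qlin -K_{\P^n}$ with $\lct(\P^n,D)<\lct(\P^n,\bar{G})$, a contradiction. The effective semicontinuity and compactness are then used only for the separate issue of uniformity of the constant $C$, where you invoke them correctly. So the plan is the right one, but the limiting step needs to be replaced by this contrapositive argument resting on the error control in the approximation theorem rather than on semicontinuity.
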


The number $\mathrm{lct}(\mathbb{P}^{n},\bar{G})$ is usually
called $\bar{G}$-equivariant \emph{global log canonical threshold}
of $\mathbb{P}^{n}$. Despite the fact that
$\mathrm{lct}(\mathbb{P}^{n},\bar{G})=\alpha_{\bar{G}}(\mathbb{P}^{n})$,
we still prefer to work with the number
$\mathrm{lct}(\mathbb{P}^{n},\bar{G})$ throughout this paper,
because it is easier to handle
than~$\alpha_{\bar{G}}(\mathbb{P}^{n})$. For example, it follows
immediately from the definition~of the number
$\mathrm{lct}(\mathbb{P}^{n},\bar{G})$ that
$\mathrm{lct}(\mathbb{P}^{n},\bar{G})\leqslant d/(n+1)$ if
the~group~$G$ has a~semi-invariant of degree~$d$ (a semi-invariant
of the group $G$ is a polynomial whose zeroes define a
$\bar{G}$-invariant hypersurface in $\mathbb{P}^{n}$).

Recall that an element $g\in G$ is called a \emph{reflection} (or
sometimes a \emph{quasi-reflection}) if there is a hyperplane in
$\mathbb{P}^{n}$ that is pointwise fixed by $\phi(g)$. To answer
Question~\ref{question:exceptional} one can always assume that the
group $G$ does not contain reflections (cf.
\cite[Remark~1.16]{ChSh10}). On the other hand, one can easily
check that there exists a~finite subgroup
$G^{\prime}\subset\SL_{n+1}(\mathbb{C})$ such~that
$\phi(G^{\prime})=\bar{G}$. So to answer
Question~\ref{question:Yanir} one can also assume that
$G\subset\SL_{n+1}(\mathbb{C})$, which implies, in particular,
that the group~$G$ does not contain reflections. Moreover, if the
group $G$ does not contain reflections, then the singularity
$\mathbb{C}^{n+1}\slash G$ is exceptional if and only if
the~singularity $\mathbb{C}^{n+1}\slash G^{\prime}$ is exceptional
thanks to the following

\begin{theorem}[{\cite[Theorem~3.17]{ChSh09}}]
\label{theorem:criterion} Let $G$ be a~finite subgroup in
$\GL_{n+1}(\mathbb{C})$ that does not contain reflections. Then
the singularity~$\C^{n+1}\slash G$ is exceptional if and only if
for any $\bar{G}$-invariant effective $\mathbb{Q}$-divisor $D$ on
$\mathbb{P}^{n}$ such that $D\sim_{\mathbb{Q}}
-K_{\mathbb{P}^{n}}$, the~log pair $(\mathbb{P}^{n}, D)$ is
Kawamata log terminal.
\end{theorem}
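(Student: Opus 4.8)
The plan is to realize $X=\C^{n+1}/G$ as an affine cone and to transfer all discrepancy information over the vertex to log canonical data on $\P^{n}$. Write $q\colon\C^{n+1}\to X$ for the quotient map and $O=q(0)$. Since $G$ contains no reflections, $q$ is \'etale in codimension one, so $K_{\C^{n+1}}=q^{*}K_{X}$ and discrepancies may be computed upstairs with no ramification correction. The diagonal $\C^{*}$-action on $\C^{n+1}$ commutes with $G$ and descends to $X$, with $(X\setminus O)/\C^{*}\cong\P^{n}/\bar{G}$; thus $X$ is the affine cone over $\P^{n}/\bar{G}$, and blowing up $O$ produces $f\colon W\to X$ whose exceptional divisor $E$ is $\P^{n}/\bar{G}$.

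The key computation is the discrepancy of $E$ against a coned boundary. Given a $\bar{G}$-invariant effective $\Q$-divisor $D\qlin -K_{\P^{n}}$, let $D_{X}$ be the divisor on $X$ whose pullback to $\C^{n+1}$ is the affine cone over $D$. Because $D\qlin-K_{\P^{n}}=\OO_{\P^{n}}(n+1)$ has degree $n+1$, the cone over $D$ has multiplicity $n+1$ at $O$, so $f^{*}D_{X}=\widetilde{D}_{X}+(n+1)E$; combined with $K_{W}=f^{*}K_{X}+nE$ this gives $a(X,D_{X},E)=n-(n+1)=-1$. Moreover the cone adjunction formula reads
\[
\bigl(K_{W}+E+\widetilde{D}_{X}\bigr)\big|_{E}\qlin K_{\P^{n}}+D\qlin 0,
\]
the $\bar{G}$-quotient different being absorbed upon passing to the $\bar{G}$-cover $\P^{n}\to E$, again using that there are no reflections. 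Hence $E$ is a log canonical place of $(X,D_{X})$, and inversion of adjunction identifies the remaining divisorial valuations of discrepancy $-1$ over $O$ with the non-klt places of $(\P^{n},D)$. This is exactly the reason the normalization $D\qlin-K_{\P^{n}}$ appears: it is the unique scale at which $E$ itself reaches discrepancy $-1$.

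From here I would read off both implications. If every $(\P^{n},D)$ with $D\qlin-K_{\P^{n}}$ is klt, then for each coned boundary $E$ is the \emph{unique} divisor of discrepancy $-1$. To pass from coned (i.e. $\C^{*}$-invariant) boundaries to an arbitrary effective $\Q$-divisor $D_{V}$ on $X$ with $(X,D_{V})$ log canonical, I would degenerate $D_{V}$ by the $\C^{*}$-action, take the flat limit $D_{V}^{0}=\lim_{t\to 0}t^{*}D_{V}$, and invoke lower semicontinuity of log canonical thresholds: any failure of exceptionality (two exceptional divisors of discrepancy $-1$, necessarily concentrating at the $\C^{*}$-fixed vertex $O$) survives in the $\C^{*}$-invariant limit, where it is excluded by the previous paragraph, so $X$ is exceptional by Definition~\ref{definition:exceptional}. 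Conversely, if some $(\P^{n},D)$ with $D\qlin-K_{\P^{n}}$ fails to be klt, I would first arrange it to be \emph{strictly} log canonical without leaving the class $-K_{\P^{n}}$: replace a threshold-computing divisor $c_{0}D$ by $c_{0}D+(1-c_{0})D''$ for a generic $\bar{G}$-invariant $D''\qlin-K_{\P^{n}}$ avoiding the relevant log canonical center, so that the center still carries discrepancy $-1$. Coning this divisor then yields a log canonical pair $(X,D_{X})$ for which both $E$ and the lifted center have discrepancy $-1$, so $X$ is not exceptional.

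The hard part will be the inversion-of-adjunction step on the quotient-singular cone: one must show precisely that non-klt places of $(\P^{n},D)$ lift bijectively to discrepancy-$(-1)$ valuations over $O$ distinct from $E$, controlling the different contributed by the $\bar{G}$-quotient. The subsidiary $\C^{*}$-degeneration also needs care, namely that the flat limit $D_{V}^{0}$ remains an effective $\Q$-divisor witnessing the same failure (semicontinuity of the log canonical threshold and concentration of the worst singularity at the vertex $O$). The genericity reduction to a strictly log canonical boundary in the class $-K_{\P^{n}}$ is routine by comparison.
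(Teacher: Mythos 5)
First, a point of order: the paper does not prove this statement at all --- it is quoted verbatim from \cite[Theorem~3.17]{ChSh09} --- so your sketch can only be measured against the argument given there. Your skeleton (realize $X=\C^{n+1}/G$ as the cone over $\P^{n}/\bar{G}$, blow up the vertex, check that the exceptional divisor $E$ has discrepancy exactly $-1$ against the cone over any $D\qlin-K_{\P^{n}}$, and use adjunction to $E$ to translate log canonical places) is indeed the mechanism behind the cited result, and your forward direction (a non-klt $(\P^{n},D)$ produces a coned log canonical pair on $X$ with two log canonical places) is essentially correct modulo standard inversion of adjunction. One slip worth flagging: when $G$ contains non-trivial scalar matrices --- as it does for the group of Theorem~\ref{theorem:main}, whose center is $\Z_3$ --- the quotient of the blow-up of $0\in\C^{n+1}$ is ramified along $E$ with index $r$ equal to the order of the subgroup of scalars in $G$, so that $K_{W}=f^{*}K_{X}+(\frac{n+1}{r}-1)E$ and $f^{*}D_{X}=\widetilde{D}_{X}+\frac{n+1}{r}E$; your coefficients $n$ and $n+1$ are wrong in general, although the two errors cancel and the conclusion $a(X,D_{X},E)=-1$ survives.

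The genuine gap is in the converse direction, precisely at the step you yourself flag. Lower semicontinuity of the log canonical threshold is not the right tool: if $(X,D_{V})$ is log canonical with two exceptional divisors of discrepancy $-1$, the flat limit $D_{V}^{0}=\lim_{t\to 0}t^{*}D_{V}$ may give a non-lc pair, and after rescaling by the threshold you lose all control over whether \emph{two} lc places persist --- semicontinuity of a single number does not transport the count of lc places to the special fibre. Moreover $D_{V}^{0}$ is the cone over a divisor on $\P^{n}$ of degree $\mathrm{mult}_{O}$, which need not equal $n+1$, so it does not automatically land in the class $-K_{\P^{n}}$ where your hypothesis applies. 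The proof in \cite{ChSh09} avoids the degeneration entirely: it first establishes (following Markushevich--Prokhorov and Prokhorov's theory of plt blow-ups) that exceptionality of a klt germ is detected on a plt blow-up $\sigma\colon Y\to X$ with exceptional divisor $S$ by the condition that $(S,\mathrm{Diff}_{S}(0)+\Theta)$ be klt for every effective $\Theta\qlin-(K_{S}+\mathrm{Diff}_{S}(0))$ --- a statement whose proof rests on Koll\'ar--Shokurov connectedness and a tie-breaking perturbation rather than on specialization --- and only then identifies $S\cong\P^{n}/\bar{G}$, identifies $\mathrm{Diff}_{S}(0)$ with the branch divisor of $\P^{n}\to S$, and matches boundaries $\Theta$ with $\bar{G}$-invariant divisors $D\qlin-K_{\P^{n}}$, using exactly your observation that the absence of reflections makes both quotient maps \'etale in codimension one. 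To make your route self-contained you would have to supply that connectedness/tie-breaking argument; as written, the $\C^{*}$-degeneration step does not close.
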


\begin{corollary}
\label{corollary:criterion} Let $G$ be a~finite subgroup in
$\GL_{n+1}(\mathbb{C})$ that does not contain reflections. Then
\begin{itemize}
\item the~singularity $\C^{n+1}/G$ is exceptional if $\mathrm{lct}(\mathbb{P}^{n},\bar{G})>1$,%
\item the~singularity $\C^{n+1}/G$ is not exceptional if either $\mathrm{lct}(\mathbb{P}^{n},\bar{G})<1$,%
\item the~singularity $\C^{n+1}/G$ is not exceptional if $G$ has a~semi-invariant of degree~at~most~$n+1$,%
\item for any~subgroup $G^{\prime}\subset\GL_{n+1}(\mathbb{C})$
such that $G^{\prime}$ does not contain reflections and
$\phi(G^{\prime})=\bar{G}$, the~singularity $\C^{n+1}/G$ is
exceptional if and only if the~singularity $\mathbb{C}^{n+1}\slash G^{\prime}$ is exceptional.%
\end{itemize}
\end{corollary}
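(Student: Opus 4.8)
The plan is to derive Corollary~\ref{corollary:criterion} directly from Theorems~\ref{theorem:criterion} and~\ref{theorem:alpha}, treating each of the four bullets separately. Let me sketch the proof.

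First bullet: assume $\mathrm{lct}(\P^n,\bar G)>1$. I want to apply Theorem~\ref{theorem:criterion}, so I need every $\bar G$-invariant effective $\Q$-divisor $D\qlin -K_{\P^n}$ to give a Kawamata log terminal pair $(\P^n,D)$. By the very definition of $\mathrm{lct}(\P^n,\bar G)$, the pair $(\P^n,\lambda D)$ is log canonical for every such $D$ and every $\lambda<\mathrm{lct}(\P^n,\bar G)$; in particular, since $\mathrm{lct}(\P^n,\bar G)>1$, I may choose $\lambda$ with $1<\lambda<\mathrm{lct}(\P^n,\bar G)$, so $(\P^n,\lambda D)$ is log canonical. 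Log canonicity of $(\P^n,\lambda D)$ with $\lambda>1$ forces $(\P^n,D)$ to be Kawamata log terminal. Theorem~\ref{theorem:criterion} then yields that $\C^{n+1}/G$ is exceptional.

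Second bullet: assume $\mathrm{lct}(\P^n,\bar G)<1$. Then there exists a $\bar G$-invariant effective $\Q$-divisor $D\qlin -K_{\P^n}$ such that $(\P^n,D)$ is not log canonical (otherwise the supremum defining the threshold would be at least $1$); a fortiori $(\P^n,D)$ is not Kawamata log terminal. By Theorem~\ref{theorem:criterion}, $\C^{n+1}/G$ is not exceptional. A small technical point to address here is that the supremum defining $\mathrm{lct}$ may not be attained; one should take a $\Q$-divisor $D$ witnessing a value $\lambda<1$ and rescale, or invoke the standard fact that the set of $\lambda$ for which log canonicity holds for all $D$ is closed from the left, so that $\mathrm{lct}(\P^n,\bar G)<1$ does produce an explicit non-log-canonical pair.

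Third bullet follows from the second together with the degree remark made just before Theorem~\ref{theorem:criterion}: if $G$ has a semi-invariant of degree $d\le n+1$, then $\mathrm{lct}(\P^n,\bar G)\le d/(n+1)\le 1$. If the inequality is strict we are done by the second bullet. If $d=n+1$ so that $\mathrm{lct}(\P^n,\bar G)\le 1$ with possible equality, I argue directly: the semi-invariant of degree $n+1$ cuts out a $\bar G$-invariant hypersurface $H$ of degree $n+1$, and $D=H$ is a $\bar G$-invariant effective $\Q$-divisor with $D\qlin -K_{\P^n}$; this $D$ is an integral divisor of degree $n+1=\dim\P^n+1$, and a suitable such hypersurface (e.g. one singular along a subvariety, or a union of hyperplanes) gives a pair $(\P^n,D)$ that is not Kawamata log terminal, so Theorem~\ref{theorem:criterion} applies. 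The fourth bullet is immediate: $\mathrm{lct}(\P^n,\bar G)$ depends only on $\bar G=\phi(G')=\phi(G)$ and not on the chosen lift, and neither $G$ nor $G'$ contains reflections, so Theorem~\ref{theorem:criterion} characterises exceptionality of both $\C^{n+1}/G$ and $\C^{n+1}/G'$ by the identical condition on $\bar G$.

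The main obstacle is the boundary case $\mathrm{lct}(\P^n,\bar G)=1$ in the third bullet, since the equivalence in Theorem~\ref{theorem:criterion} is stated in terms of Kawamata log terminality rather than log canonicity, and these differ precisely at threshold value $1$. I would resolve this by exhibiting a concrete $\bar G$-invariant $D\qlin -K_{\P^n}$ failing to be Kawamata log terminal, produced from the degree-$(n+1)$ semi-invariant, rather than relying on the threshold inequality alone.
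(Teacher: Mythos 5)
Your overall route is the same as the paper's: the corollary is stated there without proof precisely because each bullet is meant to follow directly from Theorem~\ref{theorem:criterion} together with the definition of $\lct(\P^n,\bar{G})$ (and, for the third bullet, the remark preceding Theorem~\ref{theorem:criterion}), which is exactly what you do. Your handling of the first, second and fourth bullets is correct, including the observation that $\lct(\P^n,\bar G)>1$ lets you pick $\lambda\in(1,\lct(\P^n,\bar G))$ and deduce that $(\P^n,D)$ itself is Kawamata log terminal.

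One point in the third bullet is off, although the conclusion survives. You write that from a semi-invariant of degree $n+1$ one should take ``a suitable such hypersurface (e.g.\ one singular along a subvariety, or a union of hyperplanes)'' to get a non-klt pair. You do not get to choose the hypersurface: it is whatever the given semi-invariant cuts out, and it may well be smooth and irreducible. The correct (and simpler) observation is that for \emph{any} reduced hypersurface $H$ of degree $n+1$, the pair $(\P^n,H)$ is automatically not Kawamata log terminal, since the component $H$ occurs in the boundary with coefficient $1$ and hence has log discrepancy $0$ (equivalently, $a(\P^n,H,H)=-1$); no singularity of $H$ is needed. More generally, for a semi-invariant of degree $d\le n+1$ one can take $D=\tfrac{n+1}{d}H\qlin -K_{\P^n}$, whose component $H$ has coefficient $\ge 1$, so $(\P^n,D)$ is not Kawamata log terminal and Theorem~\ref{theorem:criterion} applies; this handles both cases $d<n+1$ and $d=n+1$ uniformly and avoids the boundary issue at $\lct(\P^n,\bar G)=1$ that you were worried about.
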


The assumption that $G$ does not contain reflections is crucial
for Theorem~\ref{theorem:criterion} (see
\cite[Example~1.18]{ChSh09}). On the other hand, it follows from
\cite[Proposition~2.1]{Pr00} that $G$ must be primitive (see for
example \cite[Definition~1.21]{ChSh09}) if $\C^{n+1}/G$ is
exceptional. Moreover, for small $n\leqslant 4$, we have the
following

\begin{theorem}[{\cite[Theorem~1.2]{MarPr99}, \cite[Theorem~1.22]{ChSh09}}]
\label{theorem:Vanya-Kostya-invariants}  Let $G$ be a finite
subgroup in $\GL_{n+1}(\C)$ that does not contain reflections.
Suppose that $n\le 4$. Then the~following conditions are
equivalent:
\begin{itemize}
\item the~singularity $\C^{n+1}/G$ is exceptional,%
\item $\lct(\P^n, \bar{G})\ge (n+2)/(n+1)$,%
\item the~group $G$ is primitive and has no semi-invariants of degree at most $n+1$.%
\end{itemize}
\end{theorem}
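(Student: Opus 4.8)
The plan is to prove the equivalence of the three conditions by establishing a cycle of implications, where two of the three implications are formal consequences of the results already recorded and only the third carries real content. Writing (i) for ``$\C^{n+1}/G$ is exceptional'', (ii) for ``$\lct(\P^n,\bar G)\ge (n+2)/(n+1)$'', and (iii) for ``$G$ is primitive and has no semi-invariants of degree at most $n+1$'', I would prove (ii)$\Rightarrow$(i)$\Rightarrow$(iii)$\Rightarrow$(ii).

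The two easy implications come directly from Corollary~\ref{corollary:criterion} and the cited primitivity result. For (ii)$\Rightarrow$(i), note that $(n+2)/(n+1)>1$, so condition (ii) gives $\lct(\P^n,\bar G)>1$, and the first bullet of Corollary~\ref{corollary:criterion} yields that $\C^{n+1}/G$ is exceptional. For (i)$\Rightarrow$(iii), exceptionality forces $G$ to be primitive by \cite[Proposition~2.1]{Pr00} (quoted in the text), while the contrapositive of the third bullet of Corollary~\ref{corollary:criterion} shows that an exceptional singularity cannot arise from a group possessing a semi-invariant of degree at most $n+1$. Thus only the closing implication (iii)$\Rightarrow$(ii) remains.

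The substance of the theorem lies in showing that primitivity together with the absence of semi-invariants of degree $\le n+1$ forces $\lct(\P^n,\bar G)\ge (n+2)/(n+1)$. Here I would exploit the restriction $n\le 4$: for $n+1\le 5$ the finite primitive subgroups of $\GL_{n+1}(\C)$ are completely classified (Blichfeldt, as refined in \cite{MarPr99} and \cite{ChSh09}). Most of these groups carry invariants of small degree, since their invariant rings are generated in low degrees, so imposing the hypothesis that $G$ has no semi-invariant of degree $\le n+1$ eliminates all but a short explicit list of groups. For each surviving group I would bound the threshold from below by $(n+2)/(n+1)$ directly, which by Theorem~\ref{theorem:criterion} is also what governs exceptionality.

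To obtain the lower bound for a fixed group, set $\lambda=(n+2)/(n+1)$ and suppose, for contradiction, that $(\P^n,\lambda D)$ fails to be log canonical for some $\bar G$-invariant effective $\Q$-divisor $D\sim_{\Q}-K_{\P^n}$, a divisor of degree $n+1$. The locus where log canonicity fails is canonically associated to the pair, hence $\bar G$-invariant. If it has a positive-dimensional component, the reduced structure of that component is a $\bar G$-invariant proper subvariety whose defining equations produce a semi-invariant of small degree, contradicting (iii). If the locus is finite, it is a union of $\bar G$-orbits, and at each point $p$ the failure of log canonicity forces $\mathrm{mult}_p D>1/\lambda=(n+1)/(n+2)$; summing these multiplicities over an orbit and bounding them against $\deg D=n+1$ by standard intersection (Bezout-type) estimates forces the orbit to be short, whereas primitivity and the absence of small semi-invariants force every $\bar G$-orbit to be long. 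The main obstacle is precisely this bookkeeping: it must be carried out group by group on the classification list, and in particular one must handle the extremal groups for which $\lct(\P^n,\bar G)$ equals $(n+2)/(n+1)$ exactly, where the crude multiplicity inequality does not suffice and one must instead exhibit the specific invariant divisor realizing the bound. The hypothesis $n\le 4$ is essential, since it is what renders the classification finite and the orbit estimates strong enough; for larger $n$ this scheme degrades, in accordance with the negative answer recorded for $n=6$.
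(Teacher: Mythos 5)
The paper does not prove this statement at all --- it is quoted verbatim from \cite[Theorem~1.2]{MarPr99} and \cite[Theorem~1.22]{ChSh09} --- so the only comparison available is against those references. Your reduction is set up correctly: the cycle (ii)$\Rightarrow$(i)$\Rightarrow$(iii) is exactly the formal content of Corollary~\ref{corollary:criterion} together with \cite[Proposition~2.1]{Pr00}, and you correctly identify (iii)$\Rightarrow$(ii) as the only implication with substance, to be attacked through the classification of primitive subgroups of $\GL_{n+1}(\C)$ for $n\le 4$. That matches the strategy of the cited papers.

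However, your sketch of (iii)$\Rightarrow$(ii) has two genuine gaps. First, in the zero-dimensional case the multiplicity bound you extract is too weak to close the argument: non-log-canonicity of $(\P^n,\lambda D)$ at a point $p$ only gives $\mathrm{mult}_p D>1/\lambda=(n+1)/(n+2)<1$, so summing over an orbit of $N$ points and intersecting with a line (or any Bezout-type cycle) against $\deg D=n+1$ yields an inequality that is satisfied for all $N$ of interest --- it does not force the orbit to be short. The tool that actually bounds the orbit is Nadel--Shokurov vanishing applied to the multiplier ideal: since $(\lambda-1)(n+1)<1$, one gets $h^1(\mathcal{I}_Z\otimes\OO_{\P^n}(1))=0$ for the non-klt locus $Z$, hence $|Z|\le n+1$, and it is this bound that contradicts primitivity (which excludes short orbits). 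Second, in the positive-dimensional case a $\bar G$-invariant subvariety does not automatically ``produce a semi-invariant of small degree'': a $\bar G$-invariant curve or surface in $\P^n$ need not lie on any invariant hypersurface of degree $\le n+1$. One must first bound the degree and Hilbert function of the minimal log canonical centre via Kawamata subadjunction and vanishing (exactly the machinery this paper deploys for $n=8$) and only then confront the output with the representation-theoretic data of the group. Without these two ingredients the scheme does not close, even granting the classification list.
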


In particular, both Questions~\ref{question:Yanir} and
\ref{question:exceptional} are equivalent for $n\leqslant 4$ and
can be expressed in terms of primitivity and absence of
semi-invariants of small degree of the group $G$. It appears that
in higher dimensions the latter is no longer true, since there are
non-exceptional six-dimensional quotient singularities arising
from primitive subgroups without reflections in $\GL_6(\C)$ that
have no semi-invariants of degree at most~$6$ (see
\cite[Example~3.25]{ChSh09}). On the other hand, we still believe
that both Questions~\ref{question:Yanir} and
\ref{question:exceptional} are equivalent for every $n$, which can
be summarized as

\begin{conjecture}
\label{conjecture:Vanya-Kostya-I} Let $G$ be a finite subgroup in
$\GL_{n+1}(\C)$ that does not contain reflections. Then
the~singularity $\mathbb{C}^{n+1}/G$ is exceptional if and only if
$\mathrm{lct}(\mathbb{P}^{n},\bar{G})>1$.
\end{conjecture}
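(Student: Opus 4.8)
The plan is to isolate the single case not already settled by Corollary~\ref{corollary:criterion} and to reduce the whole equivalence to one existence statement about divisors that compute the global log canonical threshold. The implication $\lct(\P^n,\bar G)>1\Rightarrow\C^{n+1}/G$ exceptional is precisely the first bullet of Corollary~\ref{corollary:criterion}. For the reverse implication I argue by contraposition, assuming $\lct(\P^n,\bar G)\le 1$ and showing that $\C^{n+1}/G$ is not exceptional. If $\lct(\P^n,\bar G)<1$ this is the second bullet of Corollary~\ref{corollary:criterion}. Hence the entire content of the conjecture is concentrated in the boundary case $\lct(\P^n,\bar G)=1$, where one must prove that the singularity fails to be exceptional.

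To treat this case I would first recast the invariant as an infimum. For each fixed $\bar G$-invariant effective $\Q$-divisor $D\qlin -K_{\P^n}$ the condition that $(\P^n,\lambda D)$ be log canonical is closed in $\lambda$, so the admissible $\lambda$ form an interval $[0,\lct(\P^n,D)]$, and consequently
$$
\lct(\P^n,\bar G)=\inf_{D}\lct(\P^n,D),
$$
the infimum ranging over all such $D$. On the other hand, since $G$ contains no reflections, Theorem~\ref{theorem:criterion} says that $\C^{n+1}/G$ is exceptional exactly when $(\P^n,D)$ is Kawamata log terminal for every $D$, that is, when $\lct(\P^n,D)>1$ for every $D$. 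Comparing the two descriptions, the assumption $\lct(\P^n,\bar G)=1$ combined with exceptionality would force the infimum above to equal $1$ while every value $\lct(\P^n,D)$ is strictly greater than $1$ --- in other words the infimum would be approached but never attained. The conjecture therefore follows once one proves the following.

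\smallskip
\noindent\textbf{Key Lemma.} \emph{The infimum $\inf_D\lct(\P^n,D)$ is attained; that is, there is a $\bar G$-invariant effective $\Q$-divisor $D_0\qlin -K_{\P^n}$ with $\lct(\P^n,D_0)=\lct(\P^n,\bar G)$.}
\smallskip

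Granting this, the boundary case closes at once: when $\lct(\P^n,\bar G)=1$ the divisor $D_0$ has $\lct(\P^n,D_0)=1$, so $(\P^n,D_0)$ is log canonical but not Kawamata log terminal, and Theorem~\ref{theorem:criterion} shows that $\C^{n+1}/G$ is not exceptional.

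My strategy for the Key Lemma is a compactness-and-semicontinuity argument supplemented by a boundedness input. For a fixed denominator $m$, the $\bar G$-invariant members of $|{-mK_{\P^n}}|$ form a closed subvariety of a projective space, hence a compact family, and $D\mapsto\lct(\P^n,D)$ is lower semicontinuous on it; so a minimizer $D_m$ over denominator $m$ exists and $\lct(\P^n,\bar G)=\inf_m\lct(\P^n,D_m)$. The hard part will be to pass to the limit $m\to\infty$: a priori the denominators can grow without bound, and an $\mathbb{R}$-divisorial limit of the $D_m$ need be neither a $\Q$-divisor nor compatible with the log canonical threshold. The crux is thus a boundedness statement --- that the divisors computing $\lct(\P^n,\bar G)$ can be chosen from a single bounded family, equivalently that one uniformly bounded denominator suffices --- which I expect to be the main obstacle and to require boundedness of log Fano pairs together with the rationality of $\lct(\P^n,\bar G)$ (equivalently, of the $\bar G$-invariant $\alpha$-invariant, via Theorem~\ref{theorem:alpha}). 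Once such boundedness is available, lower semicontinuity on the resulting compact family yields the minimizer $D_0$ and completes the proof.
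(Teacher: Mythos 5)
You should first note that the paper offers no proof of this statement: it is stated (and labelled) as a \emph{conjecture}, explicitly presented as an open belief of the authors, verified only for $n\le 4$ (Theorem~\ref{theorem:Vanya-Kostya-invariants}), $n=5$ (Theorem~\ref{theorem:dime-6}) and $n=6$ (Theorem~\ref{theorem:dim-7}) via classification of the relevant primitive subgroups, not via any general argument of the kind you sketch. So there is no paper proof to match; the question is whether your proposal actually constitutes one, and it does not. Your reduction of the conjecture to the boundary case $\lct(\P^n,\bar G)=1$ is correct and clean: both strict inequalities are indeed covered by Corollary~\ref{corollary:criterion}, and (using openness of the Kawamata log terminal condition on a fixed log resolution) exceptionality is equivalent to $\lct(\P^n,D)>1$ for every admissible $D$, so everything hinges on whether the infimum $\inf_D\lct(\P^n,D)$ can equal $1$ without being attained. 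But this means your Key Lemma, in the only case where it is needed, is \emph{logically equivalent} to the conjecture itself: if the infimum equals $1$ and is not attained, then every pair $(\P^n,D)$ is Kawamata log terminal, Theorem~\ref{theorem:criterion} makes $\C^{n+1}/G$ exceptional, and the conjecture is false; conversely, attainment settles it. You have therefore reformulated the conjecture, not proved it --- the entire difficulty has been moved into an unproven statement of essentially the same depth.

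The gap is exactly where you flag it, and it is not a technicality. For fixed $m$, lower semicontinuity of $D\mapsto\lct(\P^n,D)$ on the compact family of $\bar G$-invariant members of $|-mK_{\P^n}|$ (together with constructibility, so that only finitely many values occur) does produce a minimizer $D_m$; that step is fine. But nothing in the paper, nor in the toolkit it uses (Nadel--Shokurov vanishing, Kawamata subadjunction, representation theory of the specific group), gives the uniform bound on denominators needed to pass to $m\to\infty$, and the rationality of $\lct(\P^n,\bar G)$ that your argument would lean on is itself not established here (Theorem~\ref{theorem:alpha} only identifies it with Tian's $\alpha$-invariant). Note also that even granting rationality, a rational non-attained infimum is not excluded by any result quoted in the paper. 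By contrast, the cases where the conjecture is known are proved by completely different means: explicit classification of the finite groups involved and case-by-case estimates of $\lct$, as in Theorems~\ref{theorem:Vanya-Kostya-invariants}, \ref{theorem:dime-6} and~\ref{theorem:dim-7}, and as in the proof of Theorem~\ref{theorem:main} itself, which establishes the strict bound $\lct(\P^8,\bar G)\ge 10/9$ directly rather than through any attainment principle. Your proposal is a reasonable research plan --- and attainment/boundedness statements of this flavour did later become available through boundedness of complements --- but as written it assumes the crux rather than proving it.
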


In fact, Conjecture~\ref{conjecture:Vanya-Kostya-I} still holds
for $n=5$, because of

\begin{theorem}[{\cite[Theorem~1.14]{ChSh10}}]
\label{theorem:dime-6} Let $G$ be a finite subgroup in
$\SL_6(\mathbb{C})$. Then the~following are equivalent:
\begin{itemize}
\item the~singularity $\mathbb{C}^{6}\slash G$ is exceptional,%
\item the~inequality $\mathrm{lct}(\mathbb{P}^{5},\bar{G})\geqslant 7/6$ holds, %
\item either $\bar{G}$ is the~Hall--Janko sporadic simple group (see \cite{Li70}), or $G\cong 6.\A_{7}$ and $\bar{G}\cong\A_{7}$.%
\end{itemize}
\end{theorem}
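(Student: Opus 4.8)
The plan is to prove the three conditions equivalent by closing the cycle $(\mathrm{ii})\Rightarrow(\mathrm{i})\Rightarrow(\mathrm{iii})\Rightarrow(\mathrm{ii})$, where $(\mathrm{i})$, $(\mathrm{ii})$, $(\mathrm{iii})$ denote the three bullets in the order displayed. Since $G\subset\SL_6(\C)$, every nontrivial element has determinant $1$ and so cannot fix a hyperplane pointwise; hence $G$ contains no reflections, and both Theorem~\ref{theorem:criterion} and Corollary~\ref{corollary:criterion} apply to it. The implication $(\mathrm{ii})\Rightarrow(\mathrm{i})$ is then immediate from the first bullet of Corollary~\ref{corollary:criterion}, since $\lct(\P^5,\bar G)\ge 7/6>1$ forces $\C^6/G$ to be exceptional. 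Observe that the cycle never requires proving directly the ``gap'' that $\lct(\P^5,\bar G)\notin(1,7/6)$: the fact that $\lct(\P^5,\bar G)\ge 7/6$ whenever $\C^6/G$ is exceptional drops out of the composite $(\mathrm{i})\Rightarrow(\mathrm{iii})\Rightarrow(\mathrm{ii})$.

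For $(\mathrm{i})\Rightarrow(\mathrm{iii})$ I would first record the constraints forced by exceptionality: by \cite[Proposition~2.1]{Pr00} the group $G$ is primitive, and by the third bullet of Corollary~\ref{corollary:criterion} the group $G$ has no semi-invariant of degree at most $6$. Feeding primitivity into the known classification of finite primitive subgroups of $\PGL_6(\C)$ produces a finite list of candidates for $\bar G$; discarding those admitting a semi-invariant of degree $\le 6$, computed from the character table via $\Sym^d$ for $d\le 6$, removes most of the list---including the image of the standard permutation representation of $\A_7$, which carries an invariant quadric. For each remaining candidate other than $J_2$ and $\A_7$ I would establish non-exceptionality by producing a $\bar G$-invariant effective $\Q$-divisor $D\qlin-K_{\P^5}$ with $(\P^5,D)$ not Kawamata log terminal and invoking Theorem~\ref{theorem:criterion}; the natural choice is $D=\tfrac{6}{d}V$ with $V$ the lowest-degree invariant hypersurface, of degree $d\ge 7$, which for such groups turns out singular enough that $\lct(\P^5,V)\le 6/d$. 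The two groups that survive are exactly $\bar G=J_2$, acting through the six-dimensional representation of the perfect extension $2.J_2\subset\SL_6(\C)$, and $\bar G\cong\A_7$, acting through the faithful six-dimensional representation of $6.\A_7\subset\SL_6(\C)$.

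For $(\mathrm{iii})\Rightarrow(\mathrm{ii})$ the absence of semi-invariants of degree $\le 6$ is the decisive input. Grouping the prime components of a $\bar G$-invariant effective $\Q$-divisor $D\qlin-K_{\P^5}$ into $\bar G$-orbits writes $D=\sum_O c_O V_O$, where each $V_O$ is an invariant integral hypersurface, hence the zero locus of a semi-invariant, so $\deg V_O\ge 7$; the normalization $\sum_O c_O\deg V_O=6$ then forces $\sum_O c_O\le 6/7$, whence $\tfrac{7}{6}D=\sum_O\tfrac{7c_O}{6}V_O$ has coefficient sum at most $1$. Showing that $(\P^5,\tfrac{7}{6}D)$ is log canonical thus reduces to understanding the singularities of the low-degree invariant hypersurfaces $V_O$ and the way their components meet, the extremal case being a single irreducible invariant hypersurface $V$ of degree $7$, for which $\tfrac{7}{6}D=V$ and log canonicity of the pair is exactly log canonicity of $V$. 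Since the singular locus of an invariant hypersurface is again $\bar G$-invariant, hence a union of $\bar G$-orbits, I would analyze it orbit by orbit, bounding the multiplicity of $V_O$ along each orbit against $\deg V_O$ and using the point-stabilizer's action on the local equation to constrain the singularity type. For $J_2$ this is comfortable because the orbits on $\P^5$ are large---no proper subgroup has index below $100$---while for $\A_7$ the orbits and the generators of the invariant ring have to be listed explicitly.

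The step I expect to be the main obstacle is precisely this local analysis when the singular locus of a minimal-degree invariant hypersurface is positive-dimensional. An orbit-size-times-multiplicity count disposes of isolated singular points, but along an invariant curve or surface $Z$ one must bound $\mathrm{mult}_Z V_O$ and the contact orders carefully, restrict the pair by inversion of adjunction to a hyperplane section or to a general member of an invariant linear system through $Z$, and compare with the degree of the smallest invariant subvariety containing $Z$. Executing this requires the full invariant-theoretic data of $J_2$ and $\A_7$ on $\P^5$---the lowest degrees and generators of the invariant rings together with the complete orbit stratification---so that the finitely many low-degree invariant hypersurfaces and their singular loci can each be shown to yield log canonical pairs; assembling and dispatching these cases is where the real work lies.
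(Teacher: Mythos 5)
The first thing to note is that the paper you were asked to match contains no proof of this statement at all: Theorem~\ref{theorem:dime-6} is quoted verbatim from \cite[Theorem~1.14]{ChSh10}, so the relevant comparison is with the proof given there --- whose machinery is exactly what the present paper redeploys in dimension $8$ to prove Theorem~\ref{theorem:main}. Your top-level skeleton (the cycle of implications, primitivity via \cite[Proposition~2.1]{Pr00}, elimination of candidates from the classification of primitive subgroups of $\SL_6(\C)$ by low-degree semi-invariants, and the observation that the permutation copy of $\A_7$ dies on its invariant quadric) is consistent with that strategy. But the decisive step, your $(\mathrm{iii})\Rightarrow(\mathrm{ii})$, rests on a false reduction. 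From $\sum_O c_O\deg V_O=6$ and $\deg V_O\ge 7$ you correctly get $\sum_O c_O\le 6/7$, but a coefficient sum at most $1$ does not make $(\P^5,\tfrac{7}{6}D)$ log canonical, and log canonicity does \emph{not} ``reduce to understanding the singularities of the low-degree invariant hypersurfaces'': the divisor $D$ may equal $\tfrac{6}{d}V$ for an invariant hypersurface $V$ of arbitrarily large degree $d$, which a priori could have multiplicity greater than $d/7$ along some invariant point, curve or surface. There are invariant hypersurfaces in every sufficiently large degree, so your proposed case analysis covers only finitely many of infinitely many potentially extremal divisors, and the assertion that the extremal case is a single degree-$7$ invariant is unjustified --- the threshold $\lct(\P^5,\bar G)$ is an infimum over all invariant divisors and is not in general computed on minimal-degree invariants. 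What the actual proof does instead is uniform in $D$: assume $\lct(\P^5,\bar{G})<7/6$, extract a strictly log canonical pair $(\P^5,\lambda D)$, pass to a minimal center of log canonical singularities, and use Nadel--Shokurov vanishing together with Kawamata subadjunction to constrain the dimension, degree and Hilbert function of the $\bar G$-orbit of that center, deriving a contradiction with representation-theoretic data (absence of invariant subvarieties of small degree, sizes of orbits, dimensions of subrepresentations of $\Sym^m(U^{\vee})$). Without this device, the positive-dimensional situation you yourself flag as ``the main obstacle'' is not merely laborious but genuinely unbounded.

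A second, related gap sits in your $(\mathrm{i})\Rightarrow(\mathrm{iii})$. In dimension $6$, primitivity plus the absence of semi-invariants of degree at most $6$ does \emph{not} imply exceptionality --- the present paper explicitly cites \cite[Example~3.25]{ChSh09} for a primitive reflection-free subgroup of $\GL_6(\C)$ with no semi-invariants of degree $\le 6$ whose quotient is nonetheless not exceptional. So for each surviving candidate other than $2.J_2$ and $6.\A_7$ you must actually exhibit an invariant effective $\Q$-divisor $D\qlin -K_{\P^5}$ with $(\P^5,D)$ not Kawamata log terminal, and your default choice $D=\tfrac{6}{d}V$ with $V$ the minimal-degree invariant is only a hope: nothing guarantees $\lct(\P^5,V)\le 6/d$, and in the known examples the non-klt divisor is manufactured from special invariant geometry (high multiplicity along a small invariant subvariety), not from the lowest-degree invariant hypersurface. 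So while your chain of implications is logically organized correctly (in particular, you rightly note that the gap $\lct\notin(1,7/6)$ falls out of the cycle for free), both substantive implications are carried by steps that would fail as stated.
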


In particular, both Questions~\ref{question:Yanir} and
\ref{question:exceptional} are equivalent and both have positive
answers for $n=5$. For $n=6$, both Questions~\ref{question:Yanir}
and \ref{question:exceptional} are also equivalent and both have
negative answers due to

\begin{theorem}[{\cite[Theorem~1.16]{ChSh10}}]
\label{theorem:dim-7} For every finite subgroup  $G$ in
$\GL_{7}(\C)$, the~singularity $\C^{7}/G$ is not exceptional and
$\lct(\P^n, \bar{G})\leqslant 1$.
\end{theorem}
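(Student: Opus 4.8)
The plan is to turn both conclusions into the single task of producing, for every finite $G\subset\GL_7(\C)$, a $\bar{G}$-invariant effective $\Q$-divisor $D\qlin -K_{\P^6}$ for which the log pair $(\P^6,D)$ is not Kawamata log terminal. Since $-K_{\P^6}\qlin\OO(7)$, such a $D$ has degree $7$; its very existence forces $\lct(\P^6,\bar{G})\le 1$ straight from the definition of the global log canonical threshold, and it forces $\C^7/G$ to be non-exceptional by Theorem~\ref{theorem:criterion}, after reducing to the reflection-free case via the last item of Corollary~\ref{corollary:criterion}. I would also record the elementary remark that a single semi-invariant of degree at most $7$ already does the job, again by Corollary~\ref{corollary:criterion}.

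First I would dispose of the groups that are not primitive, which by \cite[Proposition~2.1]{Pr00} are automatically non-exceptional. If the representation on $\C^7$ is reducible, choose a proper nonzero invariant subspace and let $\Lambda\subset\P^6$ be the corresponding invariant linear subspace; the hyperplanes containing $\Lambda$ form a $\bar{G}$-invariant linear system, and averaging over a $\bar{G}$-orbit of them produces an invariant divisor of degree $7$ with multiplicity $7$ along $\Lambda$, which exceeds the codimension of $\Lambda$, so the pair is not even log canonical and $\lct(\P^6,\bar{G})<1$. If $G$ is irreducible but imprimitive then, because $7$ is prime, the space $\C^7$ must break into seven lines cyclically permuted by $G$; the monomial $x_0x_1\cdots x_6$ is then an invariant of degree $7$, and its zero locus is the coordinate simplex, which is log canonical but not log terminal. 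Hence $\lct(\P^6,\bar{G})\le 1$ in this case too, and we are reduced to primitive reflection-free subgroups.

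Next I would appeal to the classification of finite primitive subgroups of $\GL_7(\C)$ modulo scalars, which is short because the degree $7$ is prime: up to conjugacy one is left with the preimages of $\PSL_2(7)$, $\PSL_2(8)$, $\mathrm{PSU}_3(3)$, $\mathrm{Sp}_6(2)$ and $\A_8$, together with the normalizer family $7^{1+2}:\SL_2(7)$ of an extraspecial subgroup and its primitive subgroups. For each group I would read off, from its character table via Molien's series, the least degree of a nonzero semi-invariant. Whenever the $7$-dimensional representation is self-dual it is orthogonal, since $7$ is odd, and hence carries an invariant quadric of degree $2\le 7$; this settles $\PSL_2(7)$, $\mathrm{Sp}_6(2)$ and the standard representation of $\A_8$ (where the quadric is $\sum x_i^2$) at once by Corollary~\ref{corollary:criterion}.

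The heart of the proof, and the step I expect to be the main obstacle, is the handling of the groups whose $7$-dimensional representation is \emph{not} self-dual and so admits no invariant quadric: the complex representations of $\PSL_2(8)$ and $\mathrm{PSU}_3(3)$, and above all the extraspecial normalizer $7^{1+2}:\SL_2(7)$, which is the precise seven-dimensional analogue of the nine-dimensional group $3^{1+4}:\mathrm{Sp}_4(3)$ of Theorem~\ref{theorem:main}. For the extraspecial group the central scalar of $7^{1+2}$ acts on degree-$d$ forms by the $(-d)$th power of a primitive seventh root of unity, so invariants and semi-invariants can occur only when $7\mid d$, the smallest possibility being $d=7$. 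The delicate point is to show that this lowest possible degree is actually attained: one symmetrizes the monomial invariant $x_0\cdots x_6$, which belongs to each of the eight Lagrangian coordinate frames permuted transitively by $\SL_2(7)\cong\mathrm{Sp}_2(7)$, and checks that the resulting sum is a nonzero invariant of the full normalizer. This degree-$7$ semi-invariant yields $\lct(\P^6,\bar{G})\le 1$ and non-exceptionality through Corollary~\ref{corollary:criterion}, and it is precisely this symmetrization surviving in dimension $7$ — while its nine-dimensional counterpart vanishes, leaving the lowest semi-invariant of $3^{1+4}:\mathrm{Sp}_4(3)$ in degree $12$ — that explains why dimension $7$ admits no exceptional quotient singularity whereas dimension $9$ does. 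Carrying out the Molien computations for $\PSL_2(8)$ and $\mathrm{PSU}_3(3)$, and verifying the non-vanishing of the symmetrized monomial for the extraspecial normalizer, is the real content that remains.
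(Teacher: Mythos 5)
This statement is imported verbatim from \cite[Theorem~1.16]{ChSh10}; the present paper contains no proof of it, so your attempt can only be measured against the argument in that reference, whose strategy (reduce to primitive reflection-free subgroups, then run through the classification of primitive finite subgroups of $\GL_7(\C)$ and exhibit a semi-invariant of degree at most $7$ in each case) is indeed the one you propose. Your reductions are sound: a strictly log canonical or non-log-canonical invariant divisor of degree $7$ gives both $\lct(\P^6,\bar{G})\le 1$ and, via Theorem~\ref{theorem:criterion}, non-exceptionality; the reducible and imprimitive cases are handled correctly (for an irreducible imprimitive group the blocks are forced to be seven lines because $7$ is prime, and the coordinate simplex is log canonical but not Kawamata log terminal); and the observation that an odd-dimensional self-dual irreducible representation is automatically orthogonal, hence carries an invariant quadric, is the right tool for the almost simple cases.

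There are, however, concrete gaps. First, your list of almost simple candidates omits $\PSL_2(13)$, which has two $7$-dimensional irreducible representations; this happens to be harmless (their characters are real, so the quadric argument applies), but the case analysis as written is incomplete. Second, the sorting into self-dual and non-self-dual cases is partly wrong: all irreducible characters of $\PSL_2(8)$ are real-valued, so its three $7$-dimensional representations are orthogonal and belong to the easy bucket; the only genuinely quadric-free cases are the pair of non-rational $7$-dimensional representations of $\mathrm{PSU}_3(3)$ and the extraspecial normalizer. Third, and decisively, the case $7^{1+2}:\SL_2(7)$ --- which is exactly the case that separates $n=7$ from $n=8$, where the analogous group $3^{1+4}:\mathrm{Sp}_{4}(3)$ has no semi-invariant below degree $12$ by Lemma~\ref{lemma:no-semiinvariants} --- is left unproved. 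The eight frame monomials are each only canonically defined up to scalar, the lifts of $\SL_2(7)$ permute them only projectively, and the symmetrized sum could a priori fail to be invariant or could vanish; since $\SL_2(7)$ is perfect, one cannot fall back on a nontrivial character either. You correctly identify this verification (together with the Molien computation for $\mathrm{PSU}_3(3)$) as ``the real content that remains,'' but until it is carried out the proof is a plan rather than a proof.
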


To apply Theorem~\ref{theorem:criterion} we may assume that
$G\subset\SL_{n+1}(\mathbb{C})$, since there exists a~finite
subgroup $G^{\prime}\subset\SL_{n+1}(\mathbb{C})$ such~that
$\phi(G^{\prime})=\bar{G}$. On the other hand, it is well known
that there are at most finitely many primitive finite subgroups in
$\SL_{n+1}(\mathbb{C})$ up to conjugation by Jordan's theorem for
complex linear groups. Primitive finite subgroups of
$\SL_2(\mathbb{C})$ are group-theoretic counterparts of Platonic
solids and each of them gives rise to an exceptional quotient
singularity (see \cite[Example~5.2.3]{Sho93}). Similar
classification is possible in small dimensions. For example,
primitive finite subgroups of $\SL_{n+1}(\mathbb{C})$ for
$n\leqslant 6$ have been classified long time ago (see for example
\cite{Fe71}). This allowed to obtain the complete list of all
finite subgroups in $\SL_{n+1}(\mathbb{C})$ for every $n\leqslant
6$ that give rise to exceptional quotient singularities (see
\cite{MarPr99}, \cite{ChSh09}, and \cite{ChSh10}), which implies,
in particular, that the answers to both
Questions~\ref{question:Yanir} and \ref{question:exceptional} are
positive for every $n\leqslant 5$ and are negative for $n=6$ (see
Theorem~\ref{theorem:dim-7}). We have no idea right now what are
the answers to Questions~\ref{question:Yanir} and
\ref{question:exceptional} in the cases when $n=7$ and $n\geqslant
9$, but we expect that the answers to both
Questions~\ref{question:Yanir} and \ref{question:exceptional} may
still be negative for all $n\gg 0$ due to the following

\begin{theorem}
\label{theorem:sporadic}  Let $G$ be the finite subgroup in
$\GL_{n+1}(\C)$ such that $\bar{G}$ is a sporadic simple group.
Then $\C^{n+1}/G$ is exceptional if and only if $n=5$ and
$\bar{G}$ is the~Hall--Janko sporadic simple group.
\end{theorem}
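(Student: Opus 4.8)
The plan is to reduce the statement to the already established low-dimensional classifications and then to dispose of the remaining dimensions by producing a semi-invariant of small degree. The ``if'' direction is immediate: if $n=5$ and $\bar G$ is the Hall--Janko group, then $\C^{6}/G$ is exceptional by Theorem~\ref{theorem:dime-6}. For the ``only if'' direction, assume that $\C^{n+1}/G$ is exceptional with $\bar G$ sporadic simple. We may assume $G$ contains no reflections (cf.~\cite[Remark~1.16]{ChSh10}), and then by the last bullet of Corollary~\ref{corollary:criterion} we may replace $G$ by a subgroup of $\SL_{n+1}(\C)$ with the same image $\bar G$. By \cite[Proposition~2.1]{Pr00} such a $G$ is primitive, so $V=\C^{n+1}$ is a faithful primitive---in particular irreducible---projective representation of $\bar G$.

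First I would settle the small dimensions using the cited classifications. No sporadic simple group admits a faithful projective representation of dimension at most $5$---the minimal such dimension is $6$, attained by the Hall--Janko group---so $n+1\ge 6$. If $n+1=6$, then Theorem~\ref{theorem:dime-6} forces $\bar G$ to be the Hall--Janko group or $G\cong 6.\A_{7}$; the latter is excluded since $\bar G$ is sporadic, giving $n=5$ and the desired conclusion. If $n+1=7$, then Theorem~\ref{theorem:dim-7} asserts that $\C^{7}/G$ is never exceptional, a contradiction. It therefore remains to rule out all faithful irreducible projective representations of sporadic simple groups of dimension $n+1\ge 8$.

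For these cases I would invoke the third bullet of Corollary~\ref{corollary:criterion}: it suffices to exhibit, in each such representation $V$, a $\bar G$-semi-invariant of degree at most $n+1$, that is, a nonzero element of $\big(\Sym^{d}V^{*}\big)^{G}$ for some $d\le n+1$. The list of pairs $(\bar G,V)$ with $\dim V\ge 8$ is finite and may be read off from the ATLAS \cite{Atlas}, and the multiplicity of the trivial representation in $\Sym^{d}V^{*}$ is computed from the character of $V$ by Molien's formula. If $V$ is of orthogonal type, then $\big(\Sym^{2}V^{*}\big)^{G}\ne 0$ furnishes a quadratic semi-invariant, and we are done because $2\le n+1$. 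If $V$ is of symplectic or complex type there is no quadratic invariant, and I would instead compute the Molien series degree by degree until the first invariant appears, checking that its degree does not exceed $\dim V$.

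The hard part will be exactly the borderline symplectic and complex cases, where the lowest invariant degree is close to---or equal to---$\dim V$; the twelve-dimensional representation of $6.\mathrm{Suz}$ is a typical delicate instance. I expect the computational heart of the argument to lie in verifying that every such representation---including the higher-dimensional faithful representations of the Hall--Janko group itself---has its first invariant in degree at most $\dim V$. In other words, the \emph{genuinely special} feature of the six-dimensional Hall--Janko representation, that its invariant ring begins only in degree at least $7$, must be shown to be absent for every other pair $(\bar G,V)$; these finite but numerous Molien-series checks are most naturally handled with computer assistance.
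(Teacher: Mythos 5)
Your proposal is correct and follows essentially the same route as the paper: reduce to a reflection-free (primitive) linear lift, exhibit a semi-invariant of degree at most $n+1$ by explicit character/Molien computations (the paper delegates this to GAP) to rule out every case except $n=5$ with $\bar G$ the Hall--Janko group, and then conclude exceptionality in that case from Theorem~\ref{theorem:dime-6}. Your extra discussion of the low-dimensional cases and of the delicate symplectic/complex-type representations such as $6.\mathrm{Suz}$ is consistent with, and merely more explicit than, the paper's terse argument.
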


\begin{proof}
Since $\bar{G}$ is simple, we may assume that $G$ has no
quasi-reflections. Explicit computations in GAP (see \cite{GAP})
imply that $G$ has a semi-invariant of degree at most $n+1$ (and
thus $\C^{n+1}/G$ is not exceptional by
Theorem~\ref{theorem:criterion}) unless $n=5$ and $\bar{G}$ is
the~Hall--Janko sporadic simple group\footnote{Similarly, one can
show that $G$ has a semi-invariant of degree at most $n$ (and thus
$\C^{n+1}/G$ is not weakly-exceptional (see
\cite[Definition~3.7]{ChSh09}) by  \cite[Theorem~3.16]{ChSh09})
unless either $n=5$ and $\bar{G}$ is the~Hall--Janko sporadic
simple group, or $n=11$ and $\bar{G}$ is the Suzuki sporadic
simple group (see \cite{Suzuki69}). We expect that in the latter
case the corresponding quotient singularity is actually
weakly-exceptional.}. If $n=5$ and $\bar{G}$ is the~Hall--Janko
sporadic simple group, then the singularity $\C^{n+1}/G$ is
exceptional by \cite[Theorem~1.14]{ChSh10}.
\end{proof}

An indirect evidence that both Questions~\ref{question:Yanir} and
\ref{question:exceptional} may have negative answers for all $n\gg
0$ is given by

\begin{theorem}[{\cite{Col08}}]
\label{theorem:Collins}  Let $G$ be the finite primitive subgroup
in $\GL_{n+1}(\C)$. Suppose that $n\geqslant 12$. Then
$|\bar{G}|\leqslant (n+2)!$. Moreover, if $|\bar{G}|=(n+2)!$, then
$\bar{G}\cong\SS_{n+2}$.
\end{theorem}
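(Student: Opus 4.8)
The plan is to follow the structure theory of finite primitive complex linear groups, which rests on the classification of finite simple groups; set $d=n+1$, so the representation has dimension $d\ge 13$ and the asserted bound reads $|\bar{G}|\le(d+1)!$ with equality only for $\SS_{d+1}=\SS_{n+2}$. The extremal group is transparent: the symmetric group $\SS_{n+2}$ acts faithfully and primitively on $\C^{n+1}$ through its standard (deleted permutation) representation, has trivial centre and meets the scalars trivially, so that $|\bar{G}|=|\SS_{n+2}|=(n+2)!$. Thus the real content is that every other primitive configuration is \emph{strictly} smaller. Since a primitive representation is in particular irreducible, I would first pass to the generalised Fitting subgroup $F^{*}(G)=F(G)E(G)$. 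Primitivity forces every abelian normal subgroup to act by scalars, so $F(G)$ is of symplectic type, a central product of the scalars with extraspecial groups $R_i$ of order $p_i^{1+2a_i}$; the layer $E(G)=L_1\ast\cdots\ast L_t$ is a central product of quasisimple components. The restriction of $V$ to $F^{*}(G)$ is the corresponding tensor product, so $d=\big(\prod_i p_i^{a_i}\big)\big(\prod_j e_j\big)$, and, since $C_{G}(F^{*}(G))$ is scalar, $G/F^{*}(G)$ embeds into the product of the symplectic groups $\mathrm{Sp}_{2a_i}(\F_{p_i})$, the outer automorphism groups of the components, and the symmetric group permuting isomorphic factors. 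This splits the argument into three regimes.

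I would next dispose of the two regimes in which $\bar{G}$ is forced to be small. If $E(G)=1$ (the extraspecial-normaliser case), then $|\bar{G}|$ is controlled by $\prod_i p_i^{2a_i}$ and $\prod_i|\mathrm{Sp}_{2a_i}(\F_{p_i})|$ subject to $\prod_i p_i^{a_i}\mid d$; a crude estimate gives $|\bar{G}|\le d^{O(\log d)}$, far below $(d+1)!$ for $d\ge 13$. If there are several components, $t\ge 2$, then primitivity makes the representation tensor-induced, each nontrivial factor has dimension $e\ge 2$, so $d\ge 2^{t}$ and $t\le\log_2 d$; writing $f(e)$ for the maximal order of a primitive group in dimension $e$, one bounds $|\bar{G}|\le f(e)^{t}\,t!$, which, because $d\ge e^{t}$, again grows far more slowly than $(d+1)!$. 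In both regimes the inequality is strict, so no equality can arise here.

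The decisive regime is a single quasisimple component, $E(G)=L$ with $S:=L/Z(L)$ simple; here $V$ is the tensor product of a faithful irreducible representation of $L$ of some degree $e$ dividing $d$ with a (possibly trivial) extraspecial part, and $\bar{G}$ is an extension of a subgroup of $\mathrm{Aut}(S)$ by the extraspecial-normaliser factor. The extremal case is the pure one $e=d\ge 13$ with no extraspecial factors, where $\bar{G}$ lies between $S$ and $\mathrm{Aut}(S)$, so $|\bar{G}|\le|S|\cdot|\mathrm{Out}(S)|$; when $e<d$ the extra factor is bounded as in the first regime and $(e+1)!<(d+1)!$, which only gains slack. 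The heart of the matter is the inequality
\[
|S|\cdot|\mathrm{Out}(S)|\le(e+1)!\qquad\text{for every faithful irreducible degree }e\ge 13,
\]
with equality exactly when $S=\A_{e+1}$ acting through the deleted permutation representation. For the alternating groups this is an identity, $|\A_{e+1}|\cdot 2=(e+1)!$; taking $e=d=n+1$ gives $S=\A_{n+2}$ and, since $\mathrm{Aut}(\A_{n+2})=\SS_{n+2}$ for $n\ge 12$, forces $\bar{G}=\SS_{n+2}$. For all other simple groups one runs through the classification: the Landazuri--Seitz lower bounds on the minimal degree $d(S)$ of the groups of Lie type, combined with the usual order formulae, give the strict inequality $|S|\cdot|\mathrm{Out}(S)|<(d(S)+1)!\le(e+1)!$, while the finitely many sporadic and exceptional groups have bounded minimal degree and so cannot occur at all once $e\ge 13$.

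The hard part is precisely this last, uniform estimate, which is where the classification enters in an essential way. One must check, family by family across all groups of Lie type, that the order of $S$ (times its outer automorphisms) is dominated by the factorial of one more than its minimal faithful degree, and that the domination is \emph{strict}; the classical groups in their natural small-degree representations are the tightest cases and require the sharp Landazuri--Seitz bounds, while the exceptional and sporadic groups are excluded because their faithful degrees are bounded. The threshold $n\ge 12$ is exactly what guarantees that the various non-alternating groups that are extremal in low dimensions (icosahedral, Weyl-type and extraspecial normalisers, and sporadic configurations) have already dropped strictly below $(n+2)!$, leaving $\A_{n+2}$ as the unique source of equality and hence $\bar{G}\cong\SS_{n+2}$.
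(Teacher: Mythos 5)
This statement is not proved in the paper at all: it is quoted verbatim from Collins \cite{Col08}, and the authors only remark afterwards that his proof ``uses known lower bounds for the degrees of the faithful representations of each quasisimple group, for which the classification of finite simple groups is required.'' So there is no in-paper argument to compare yours against; what can be said is that your outline does reproduce the standard architecture of Collins' actual proof --- reduction via the generalised Fitting subgroup to the three regimes (symplectic-type radical, several quasisimple components with a tensor-induced/tensor-decomposable representation, a single quasisimple component), followed by a CFSG-based comparison of $|S|\cdot|\mathrm{Out}(S)|$ with the factorial of one more than the minimal faithful projective degree, with $\A_{n+2}$ in its deleted permutation representation as the unique extremal case.

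The genuine gap is that the decisive step is asserted rather than proved. The inequality $|S|\cdot|\mathrm{Out}(S)|<(d(S)+1)!$ for every non-alternating simple group $S$, uniformly over all Lie-type families and all sporadic groups, \emph{is} the theorem; ``one runs through the classification'' is not an argument, and the classical groups near their minimal degrees (e.g. $\PSL_2(q)$, $\mathrm{PSU}_n(q)$, $\mathrm{PSp}_{2n}(q)$ in characteristic-dividing situations) genuinely require the sharp Landazuri--Seitz bounds together with explicit order estimates; none of that is carried out. The two ``small'' regimes also need more care at the threshold $d=13$ than your crude estimates admit: in the tensor-decomposable case the factors have dimension $e\ge 2$ possibly as small as $2$ or $3$, where the maximal primitive orders $f(e)$ are \emph{not} bounded by $(e+1)!$ (already $f(2)=60>3!$), so the bound $f(e)^{t}\,t!<(d+1)!$ must be checked against Blichfeldt's and Collins' low-dimensional tables rather than by an induction on the stated inequality; similarly the claim $|\bar{G}|\le d^{O(\log d)}$ in the extraspecial case hides constants that must be verified for $d=13,14,\dots$. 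As a roadmap your proposal is faithful to Collins' strategy; as a proof it omits precisely the quantitative content for which the paper defers to \cite{Col08}.
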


In fact, Collins obtained the optimal bounds for $|\bar{G}|$ for
every $n\leqslant 11$ if $G$ is primitive (his proof uses known
lower bounds for the degrees of the faithful representations of
each quasisimple group, for which the classification of finite
simple groups is required). Moreover, it follows from
\cite{Sho93}, \cite{MarPr99}, \cite{Col08}, \cite{ChSh09}, and
\cite{ChSh10} that $|\bar{G}|$ reaches its maximum on a subgroup
$\bar{G}$ in $\mathrm{Aut}(\P^n)$ with
$\mathrm{lct}(\P^n,\bar{G})>1$ if $n\leqslant 3$, and this is no
longer true for $4\leqslant n\leqslant 6$. Surprisingly, it
follows from Theorem~\ref{theorem:main} that in the case when
$n=8$, the number $|\bar{G}|$ reaches its maximum if $G$ is
isoclinic to a finite subgroup in $\GL_{9}(\C)$ that is mentioned
in Theorem~\ref{theorem:main}. For $n=11$, the number $|\bar{G}|$
reaches its maximum if $\bar{G}$ is the Suzuki sporadic simple
group.

\smallskip

In the remaining part of the paper we prove
Theorem~\ref{theorem:main}. Let $G$ be a~finite subgroup in
$\SL_9(\mathbb{C})$ from Theorem~\ref{theorem:main}. Then the
embedding $G\hookrightarrow\SL_9(\mathbb{C})$ is given by an
irreducible nine-dimensional $G$-representation\footnote{Note that
the group $G$ has two irreducible representations of dimension
$9$, but they differ only by an outer automorphism of $G$, so that
the \emph{subgroup} $G\subset\SL_9(\C)$ is defined uniquely up to
conjugation.}, which we denote by $U$.

The outline of the proof of Theorem~\ref{theorem:main} is as
follows. We assume that $\lct(\mathbb{P}^{8}, {\bar{G}})<10/9$ and
seek for a contradiction. There exists a $\bar{G}$-invariant
$\mathbb{Q}$-divisor $D$ on $\mathbb{P}^{8}$ and a positive
rational number $\lambda<10/9$ such that
$D\sim_{\mathbb{Q}}-K_{\mathbb{P}^{8}}$ and the log pair
$(\mathbb{P}^{8},\lambda D)$ is strictly log canonical, i.e. log
canonical and not Kawamata log terminal. Arguing as in
\cite{ChSh09} and \cite{ChSh10}, we apply Nadel--Shokurov
vanishing (see \cite[Theorem~9.4.8]{La04}) and Kawamata
subadjunction (see \cite[Theorem~1]{Kaw98}) to obtain restrictions
on the Hilbert polynomial of the minimal center of log canonical
singularities of the log pair $(\mathbb{P}^{8},\lambda D)$ (see
\cite[Definition~1.3]{Kaw97}, \cite{Kaw98}). Composing the latter
with results coming from representation theory we obtain a
contradiction. One of the few new ingredients of the proof is the
\emph{binomial trick} (see Lemma~\ref{lemma:qivj}).

Let us list without proofs some properties
 of the $G$-representation
$U$ (Lemmas~\ref{lemma:no-semiinvariants}, \ref{lemma:splitting},
\ref{lemma:9x45}, and~\ref{lemma:9x24}) that can be verified by
direct computations. We used GAP (see \cite{GAP}) to carry them
out.

\begin{lemma}\label{lemma:no-semiinvariants}
The group $G$ does not have semi-invariants of degree $d\leqslant
11$, and there exists a semi-invariant of the group $G$ of degree
$12$.
\end{lemma}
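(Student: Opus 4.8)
\textit{Proof proposal.} The plan is to translate the statement into a computation of isotypic multiplicities in symmetric powers, reduce it to a single abelian question via the normal subgroup $3^{1+4}$, and isolate the genuinely computational core. First I would record that a semi-invariant of degree $d$ is precisely a nonzero $f\in\Sym^{d}(U^{*})$ spanning a $G$-stable line, i.e. affording a linear character $\chi\colon G\to\C^{*}$; thus $G$ has a degree-$d$ semi-invariant iff some linear character occurs in $\Sym^{d}(U^{*})$. Now $G$ is perfect: the complement $\Sp$ is perfect, while the abelianization $3^{1+4}/[3^{1+4},3^{1+4}]\cong\F_{3}^{4}$ carries the (nontrivial, irreducible) symplectic action of $\Sp$, which has no nonzero invariant character, so $G^{\mathrm{ab}}=1$. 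Hence the only linear character is trivial, and the claim reduces to showing
\[
\dim\big(\Sym^{d}(U^{*})\big)^{G}=\langle\chi_{\Sym^{d}U},\mathbf{1}\rangle=0\quad(1\le d\le 11),\qquad \dim\big(\Sym^{12}(U^{*})\big)^{G}\ge 1 .
\]

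Next I would exploit the normal subgroup $N=3^{1+4}$ by writing $\big(\Sym^{\bullet}U^{*}\big)^{G}=\big(\big(\Sym^{\bullet}U^{*}\big)^{N}\big)^{G/N}$ with $G/N\cong\Sp$. The $N$-invariants are computable in closed form: since $U|_{N}$ is the faithful $9$-dimensional irreducible representation, the center $Z(N)=\langle z\rangle$ acts by $\omega^{\pm1}\mathrm{Id}$ (with $\omega$ a primitive cube root of unity), while every non-central element of $N$ has order $3$ and character value $0$, forcing eigenvalues $1,\omega,\omega^{2}$ each with multiplicity three, whence $\det(1-t g|_{U})=(1-t^{3})^{3}$ for all $240$ of them. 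Molien's formula then gives
\[
M_{N}(t)=\frac{1}{243}\left(\frac{1}{(1-t)^{9}}+\frac{1}{(1-\omega t)^{9}}+\frac{1}{(1-\omega^{2}t)^{9}}+\frac{240}{(1-t^{3})^{3}}\right),
\]
so that $N$-invariants occur only in degrees divisible by $3$, with dimensions $5,43,310,1570$ in degrees $3,6,9,12$. Since $G$-invariants sit inside $N$-invariants, this already kills every degree $d\le 11$ with $3\nmid d$, and leaves only the four cases $d\in\{3,6,9\}$ (to be shown to contribute nothing) and $d=12$ (to be shown to contribute something).

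The remaining and hardest step is to determine the $\Sp\cong G/N$-module structure on these four $N$-invariant spaces of dimensions $5,43,310,1570$ and to check that the trivial constituent is absent for $d=3,6,9$ but present for $d=12$. I would obtain the $\Sp$-character of each $N$-invariant space by computing the full $G$-character of $\Sym^{d}U^{*}$ from $\chi_{U}$ (expressing $\chi_{\Sym^{d}U}$ through the power sums $\chi_{U}(g^{k})$ via Newton's identities), restricting to the coset representatives of $N$, and pairing with the trivial character using the character table of $\Sp$. Concretely this amounts to evaluating the four inner products $\langle\chi_{\Sym^{d}U},\mathbf{1}\rangle$ for $d\in\{3,6,9,12\}$ and verifying they equal $0,0,0$ and a positive integer respectively. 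This is where the argument stops being structural: it requires the explicit conjugacy-class data and character table of $\Sp_{4}(3)$ together with the power maps of $G$, and so is carried out in GAP, which is exactly the ``direct computation'' the lemma refers to.
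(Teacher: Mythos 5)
Your proposal is correct, and it is worth noting that it is genuinely more structural than what the paper provides: the paper states Lemma~\ref{lemma:no-semiinvariants} with no proof at all, delegating the entire verification to a direct GAP decomposition of $\Sym^{d}(U^{\vee})$ for $d\le 12$. Your two preliminary reductions are both sound and both absent from the paper. First, $G$ is indeed perfect ($\Sp$ is perfect and the symplectic module $N/Z(N)\cong\F_3^{4}$ is a nontrivial irreducible $\Sp$-module, hence has trivial coinvariants), so semi-invariants are honest invariants. Second, your Molien series for $N=3^{1+4}$ is right, and the resulting dimensions $5$, $43$, $310$, $1570$ of $(\Sym^{3e}U^{\vee})^{N}$ for $e=1,2,3,4$ are consistent with the paper's Lemma~\ref{lemma:splitting} (e.g. $5$ in $\Delta_3$ and $4+15+24=43$ in $\Delta_6$ are exactly the constituents on which $N$ acts trivially). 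One point you should make explicit: the identity $\det(1-tg|_{U})=(1-t^{3})^{3}$ for the $240$ non-central $g\in N$ uses that $N$ is the extraspecial group of exponent $3$; this is the correct type here since only $3^{1+4}_{+}$ carries the full $\Sp$ action. What your route buys is a drastic shrinking of the part that must be trusted to the machine --- from decomposing $\Sym^{d}(U^{\vee})$ for all $d\le 12$ down to four inner products $\langle\chi,\mathbf{1}\rangle$ on $\Sp$-modules of dimensions $5,43,310,1570$ --- at the cost of some group-theoretic input (perfectness, the extraspecial character theory); both approaches ultimately rest on the character table of $\Sp$ and a finite computation, so neither is fully hand-checkable, but yours is much closer to being so.
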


Denote by $\Delta_k$ the collection of dimensions of irreducible
subrepresentations of $\Sym^k(U^{\vee})$. We will use the
following notation: writing $\Delta_k=[\ldots, r\times m,\ldots]$,
we mean that among the irreducible subrepresentations of
$\Sym^k(U^{\vee})$ there are exactly $r$ subrepresentations of
dimension~$m$ (not necessarily isomorphic to each other).
Furthermore, denote by $\Sigma_k$ the set of partial sums
of~$\Delta_k$, i.\,e. the set of all numbers $s=\sum r'_im_i$,
where $\Delta_k=[\ldots,r_1\times m_1,\ldots,r_2\times m_2,\ldots,
r_i\times m_i,\ldots]$ and $0\le r_i'\le r_i$ for all $i$. We use
the abbreviation $m_{i}$ for $1\times m_{i}$.

\begin{lemma}\label{lemma:splitting}
The representation $\Sym^2(U^{\vee})$ is irreducible (and has
dimension $45$). Futhermore, $\Delta_3=[5,160]$,
$\Delta_4=[45,180,270]$, $\Delta_5=[36,90,135,216,270,540]$,
$$
\Delta_6=[4,15,24,2\times 80,3\times 240,3\times 480,640],$$
$$\Delta_7=[9,36, 3\times 135, 3\times 180, 3\times 216,270,324,405,3\times 540,720, 2\times 729],$$
$$\Delta_8=[36,4\times 45,5\times 180,5\times 270,2\times 324,6\times 360,
3\times 405,7\times 540,2\times 576,720,729],$$
$$\Delta_9=[3\times 5,3\times 20,3\times 30,40,45,60,80,
12\times 160,12\times 240,10\times 480, 10\times 640,11\times
720].$$
\end{lemma}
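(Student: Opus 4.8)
The final statement to prove is Lemma~\ref{lemma:splitting}, which lists the decomposition of symmetric powers $\Sym^k(U^\vee)$ for $k=2,\dots,9$ into irreducible subrepresentations.

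The plan is to verify the stated decompositions by explicit character computations, exactly as the authors indicate; let me spell out the procedure underlying such a computation so that each entry of $\Delta_k$ becomes a consequence of the character table of $G$. Since $G\cong 3^{1+4}:\Sp$ has order $3^{5}\cdot 51840$, its conjugacy classes, its power maps $g\mapsto g^{i}$, and its table of irreducible characters are finite data that can be loaded from the library in \cite{GAP}, or reconstructed from the extraspecial normal subgroup $3^{1+4}$ together with the action of $\Sp$. The representation $U$ is one of the two faithful irreducible nine-dimensional representations; I fix $\chi_U$ to be its character. The other nine-dimensional irreducible is obtained by an outer automorphism of $G$, which permutes irreducible characters without changing their degrees, so the resulting collections $\Delta_k$ are insensitive to this choice.

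First I would compute the character of each symmetric power. Writing $\chi_{U^{\vee}}(g^{i})=\overline{\chi_U(g^{i})}$ for the $i$-th power sum of the eigenvalues of $g$ acting on $U^{\vee}$, Newton's identity for the complete homogeneous symmetric functions gives the recursion
$$
\chi_{\Sym^{k}(U^{\vee})}(g)=\frac{1}{k}\sum_{i=1}^{k}\chi_{U^{\vee}}\big(g^{i}\big)\,\chi_{\Sym^{k-i}(U^{\vee})}(g),\qquad \chi_{\Sym^{0}(U^{\vee})}=1.
$$
Evaluating this class-function recursion on each conjugacy class requires only $\chi_{U^{\vee}}$ together with the power maps recorded in the character table; iterating it for $k=1,\dots,9$ produces $\chi_{\Sym^{k}(U^{\vee})}$ for every $k$ we need.

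Next I would decompose each $\chi_{\Sym^{k}(U^{\vee})}$ into irreducibles. For every irreducible character $\chi_W$ of $G$, the multiplicity of $W$ in $\Sym^{k}(U^{\vee})$ is the inner product
$$
\big\langle \chi_{\Sym^{k}(U^{\vee})},\,\chi_W\big\rangle=\frac{1}{|G|}\sum_{g\in G}\chi_{\Sym^{k}(U^{\vee})}(g)\,\overline{\chi_W(g)},
$$
computed as a sum over conjugacy classes weighted by class sizes. Collecting these multiplicities over all $\chi_W$ and grouping the constituents by their degree yields exactly the lists $\Delta_k$; in particular, the assertion that $\Sym^{2}(U^{\vee})$ is irreducible is the statement that this inner product equals $1$ for the unique $45$-dimensional irreducible and $0$ otherwise. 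As an independent consistency check on each line I would verify that $\sum_i r_i m_i=\binom{k+8}{8}=\dim\Sym^{k}(U^{\vee})$, which one confirms, for instance, for $k=9$, where the right-hand side equals $24310$.

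The main obstacle is not conceptual but one of scale and bookkeeping. For $k=8,9$ the symmetric powers have dimensions $12870$ and $24310$, so the decompositions involve many constituents, and the notation $r\times m$ records only how many irreducibles of a given dimension occur, not which ones; thus care is needed to compute the multiplicity of each irreducible character separately and to aggregate by degree only afterwards, since several non-isomorphic irreducibles of $G$ share a common dimension (dimensions $45$, $180$, $240$, $270$, $540$, and $720$ each occur for more than one constituent in the lists above). Finally, I note that these tables furnish a partial cross-check on Lemma~\ref{lemma:no-semiinvariants}: a semi-invariant of degree $d$ is a one-dimensional constituent of $\Sym^{d}(U^{\vee})$, and the absence of any summand of dimension $1$ in $\Delta_k$ for $k\le 9$ re-confirms the non-existence of semi-invariants of degree at most $9$, the remaining degrees $10$ and $11$ requiring the analogous computation of $\Delta_{10}$ and $\Delta_{11}$.
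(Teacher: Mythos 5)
Your proposal is correct and coincides with what the paper actually does: the authors state Lemma~\ref{lemma:splitting} explicitly without proof, noting only that it ``can be verified by direct computations'' carried out in GAP, and the character-theoretic procedure you describe (Newton's recursion for $\chi_{\Sym^k(U^{\vee})}$ followed by inner products against the irreducible characters, aggregated by degree) is exactly that computation. Your dimension check $\sum_i r_i m_i=\binom{k+8}{8}$ and the remark that the two nine-dimensional faithful irreducibles are exchanged by an outer automorphism are both consistent with the paper.
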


\begin{lemma}\label{lemma:9x45}
Let $U_{45}\subset\Sym^4(U^{\vee})$ be the $45$-dimensional
irreducible subrepresentation. Then
$$U^{\vee}\otimes U_{45}\cong U_{90}\oplus U_{135}\oplus U_{180}$$
as a $G$-representation, where $U_{90}$, $U_{135}$ and $U_{180}$
are irreducible $G$-representations of dimensions $90$, $135$ and
$180$, respectively.
\end{lemma}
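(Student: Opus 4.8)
The plan is to prove the lemma by character theory, since $G$ is finite and the statement is purely representation-theoretic. Write $\chi$ for the character of $U$, so that $U^{\vee}$ has character $\overline{\chi}$, and write $\chi_{45}$ for the character of $U_{45}$. Then $U^{\vee}\otimes U_{45}$ has character equal to the pointwise product $\psi=\overline{\chi}\cdot\chi_{45}$, and the entire assertion reduces to decomposing $\psi$ into irreducible characters of $G$ using the standard Hermitian inner product $\langle\,\cdot\,,\,\cdot\,\rangle$ on class functions.

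First I would assemble the necessary character values. The extraspecial subgroup $H\cong 3^{1+4}$ is normal in $G$, and $U|_{H}$ is the unique $9$-dimensional faithful irreducible representation of $H$, on which the centre $Z(H)\cong\Z/3\Z$ acts by scalars and the quotient $G/H\cong\Sp$ acts through its symplectic automorphisms. This Clifford-theoretic picture makes the conjugacy classes of $G$ and the values of $\chi$ completely explicit, and it is also what governs the splittings recorded in Lemma~\ref{lemma:splitting}. For $\chi_{45}$ there is a clean route: by Lemma~\ref{lemma:splitting} the symmetric square $\Sym^2(U^{\vee})$ is already irreducible of dimension $45$, so as soon as one checks that $G$ has a unique irreducible character of degree $45$, it follows that $U_{45}\cong\Sym^2(U^{\vee})$ and hence
\[
\chi_{45}(g)=\tfrac{1}{2}\big(\overline{\chi}(g)^2+\overline{\chi}(g^2)\big).
\]
(Alternatively, $\chi_{45}$ is extracted from the character of $\Sym^4(U^{\vee})$, itself computable from $\chi(g),\chi(g^2),\chi(g^3),\chi(g^4)$, by projecting onto the unique $45$-dimensional constituent occurring in $\Delta_4=[45,180,270]$.) In practice these are exactly the numbers one reads off from the character table of $G$, which is how the computation was carried out in \cite{GAP}.

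With $\psi$ in hand, the decomposition follows from two elementary observations. Since $\dim\big(U^{\vee}\otimes U_{45}\big)=9\cdot 45=405$, the dimensions of the irreducible constituents must sum to $405$. Next I would compute $\langle\psi,\psi\rangle=\sum_i m_i^2$, where the $m_i$ are the multiplicities of the distinct irreducible constituents of $\psi$; the claim is that this equals $3$. As $3$ can be written as a sum of squares of positive integers only in the form $1+1+1$, this forces $\psi$ to be multiplicity-free with exactly three distinct irreducible constituents. Finally, evaluating the inner products $\langle\psi,\chi_i\rangle$ against the irreducible characters of $G$ pins down these three constituents as the representations of dimensions $90$, $135$ and $180$; this simultaneously excludes the competing dimension-$405$ possibilities, for instance a splitting into constituents of dimensions $135$ and $270$ (which would give $\langle\psi,\psi\rangle=2$) or any splitting with a repeated dimension. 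We conclude that $U^{\vee}\otimes U_{45}\cong U_{90}\oplus U_{135}\oplus U_{180}$.

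The genuinely laborious step, and the only real obstacle, is the first one: producing the values of $\chi$ and $\chi_{45}$ on every conjugacy class of $G$, a group of order $3^5\cdot|\Sp|$. Once those values are tabulated, the inner-product bookkeeping of the last paragraph is immediate, so the substance of the argument lies entirely in the explicit character table furnished by the Clifford theory over $H$ and computed mechanically in \cite{GAP}.
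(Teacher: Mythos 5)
Your proposal is correct and is essentially the paper's own argument made explicit: the paper states Lemma~\ref{lemma:9x45} without proof, citing direct computations in GAP, and your character-theoretic decomposition (tabulate $\overline{\chi}\cdot\chi_{45}$ and take inner products against the irreducible characters of $G$) is precisely that computation. The only substantive step in either version is producing the character table of $G\cong 3^{1+4}:\mathrm{Sp}_4(3)$, which you correctly identify and correctly delegate to the machine.
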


\begin{lemma}\label{lemma:9x24}
Let $U_{24}\subset\Sym^6(U^{\vee})$ be the $24$-dimensional
irreducible subrepresentation. Then $U^{\vee}\otimes U_{24}$ is an
irreducible $G$-representation.
\end{lemma}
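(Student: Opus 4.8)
The plan is to exploit the structure of $G$ as an extension of the extraspecial group $N=3^{1+4}$ by $\mathrm{Sp}_4(3)$ and to reduce the claim to Gallagher's theorem on extensions of invariant characters. Write $Z\cong\Z/3\Z$ for the center of $N$, so that $N/Z\cong 3^4=\F_3^4$ is elementary abelian. Being extraspecial of order $3^5$, the group $N$ has as its only irreducible representations the $81$ linear characters factoring through $N/Z$ together with exactly two faithful irreducibles of dimension $9$, distinguished by the central character by which $Z$ acts (a fixed primitive cube root of unity $\omega$ or its conjugate $\bar\omega$). The natural symplectic action of $\mathrm{Sp}_4(3)$ on $N/Z$ fixes $Z$ pointwise, so conjugation by $G$ fixes the two nontrivial central characters and hence stabilizes each of the two faithful $9$-dimensional $N$-irreducibles.

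First I would analyze the restrictions of $U$ and $U^{\vee}$ to $N$. Since $U$ is a faithful $9$-dimensional irreducible $G$-representation, $Z$ acts nontrivially on $U$, so $U|_N$ must contain a faithful $N$-irreducible; comparing dimensions shows that $U|_N$ is itself one of the two faithful $9$-dimensional irreducibles, say the one with central character $\omega$. Then $U^{\vee}|_N$ is irreducible with central character $\bar\omega$, and by the previous paragraph it is $G$-invariant. Because $\dim U^{\vee}=9=\dim(U^{\vee}|_N)$, the representation $U^{\vee}$ is an extension to all of $G$ of the $G$-invariant irreducible $N$-module $U^{\vee}|_N$.

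The crux is to show that $U_{24}$ is inflated from the quotient $\mathrm{Sp}_4(3)=G/N$, i.e. that $N$ acts trivially on $U_{24}$. Here I would combine two facts. First, $Z$ acts on $\Sym^6(U^{\vee})$ by $\bar\omega^{6}=1$, hence trivially on its subrepresentation $U_{24}$, so the action of $N$ on $U_{24}$ factors through $N/Z\cong\F_3^4$. Second, $\mathrm{Sp}_4(3)$ acts transitively on the $80$ nonzero vectors of $\F_3^4$, and correspondingly on the $80$ nontrivial characters of $N/Z$. If $N/Z$ acted nontrivially on the irreducible $G/Z$-module $U_{24}$, then by Clifford theory $U_{24}|_{N/Z}$ would contain an entire $\mathrm{Sp}_4(3)$-orbit of nontrivial characters, forcing $80\mid\dim U_{24}$; but $\dim U_{24}=24<80$, a contradiction. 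Hence $N$ acts trivially on $U_{24}$, so $U_{24}$ is the inflation of an irreducible $24$-dimensional representation $\beta$ of $\mathrm{Sp}_4(3)$.

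It then remains to apply Gallagher's theorem. Since $U^{\vee}$ extends the $G$-invariant irreducible $N$-module $U^{\vee}|_N$, Gallagher's theorem identifies the irreducible $G$-representations lying over $U^{\vee}|_N$ with the twists $U^{\vee}\otimes\mathrm{Infl}(\gamma)$ for $\gamma\in\mathrm{Irr}(\mathrm{Sp}_4(3))$, each of which is irreducible and which are pairwise non-isomorphic. Taking $\gamma=\beta$ yields precisely $U^{\vee}\otimes U_{24}$, which is therefore irreducible of dimension $9\cdot 24=216$. I expect the main obstacle to be the crux step, namely establishing that $N$ acts trivially on $U_{24}$; the numerical input $\dim U_{24}=24<80$ together with the transitivity of $\mathrm{Sp}_4(3)$ on nonzero symplectic vectors is exactly what forces this. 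The whole argument can finally be cross-checked by verifying $\langle\chi_{U^{\vee}}\chi_{U_{24}},\,\chi_{U^{\vee}}\chi_{U_{24}}\rangle_G=1$ directly in GAP.
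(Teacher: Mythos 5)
Your argument is correct, but it is genuinely different from what the paper does: the paper simply declares Lemma~\ref{lemma:9x24} (together with Lemmas~\ref{lemma:splitting} and~\ref{lemma:9x45}) to be ``verified by direct computations'' in GAP, i.e.\ its proof is a machine check that the character $\chi_{U^{\vee}}\chi_{U_{24}}$ has norm one. You instead give a conceptual, computer-free proof: Stone--von Neumann for the extraspecial group $N=3^{1+4}$ shows $U^{\vee}|_N$ is irreducible and $G$-invariant; the observation that $Z$ acts trivially on $\Sym^6(U^{\vee})$ plus Clifford theory and the transitivity of $\mathrm{Sp}_4(3)$ on the $80$ nonzero vectors of $\F_3^4$ forces $N$ to act trivially on the $24$-dimensional piece (since $24<80$); and Gallagher's theorem then gives irreducibility of $U^{\vee}\otimes U_{24}$. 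All three steps are sound, and the structural facts you invoke (that $Z=Z(G)$, that $G/N\cong\mathrm{Sp}_4(3)$ acts on $N/Z\cong\F_3^4$ via the natural symplectic representation, and that this action is transitive on nonzero vectors and hence on nontrivial characters via the symplectic self-duality) are exactly the defining data of $3^{1+4}{:}\mathrm{Sp}_4(3)$. What the paper's route buys is uniformity and speed -- the same GAP session also produces Lemmas~\ref{lemma:no-semiinvariants}, \ref{lemma:splitting} and~\ref{lemma:9x45}. What your route buys is an explanation: it shows that $U^{\vee}\otimes W$ is automatically irreducible for \emph{any} irreducible $G$-representation $W$ of dimension less than $80$ on which the center acts trivially, so the specific identity of $U_{24}$ is irrelevant; note, however, that this mechanism does not subsume Lemma~\ref{lemma:9x45}, where $Z$ acts nontrivially on $U_{45}$ and the tensor product genuinely decomposes.
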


Now we are ready to prove Theorem~\ref{theorem:main}. It follows
from Theorems~\ref{theorem:alpha} and \ref{theorem:criterion} that
to prove Theorem~\ref{theorem:main} it is enough to prove that
$4/3\geqslant\mathrm{lct}(\mathbb{P}^{8},\bar{G})\geqslant 10/9$.
On the other hand, one has
$\mathrm{lct}(\mathbb{P}^{8},\bar{G})\leqslant 4/3$ by
Lemma~\ref{lemma:no-semiinvariants}. In fact, we believe that
$\mathrm{lct}(\mathbb{P}^{8},\bar{G})=4/3$, but we are unable to
prove this now. To complete the proof of
Theorem~\ref{theorem:main}, we must prove that
$\mathrm{lct}(\mathbb{P}^{8},\bar{G})\geqslant 10/9$. Suppose
that~$\mathrm{lct}(\mathbb{P}^{8},\bar{G})<10/9$. Then there is
an~effective $\bar{G}$-invariant $\mathbb{Q}$-divisor
$D\sim_{\mathbb{Q}}\mathcal{O}_{\mathbb{P}^{8}}(9)$, and there is
a~positive rational number $\lambda<10/9$ such that
$(\mathbb{P}^{8},\lambda D)$ is strictly log canonical.

Let~$S$ be a~minimal center of log canonical singularities of the
log pair $(\mathbb{P}^{8},\lambda D)$ (see
\cite[Definition~1.3]{Kaw97}, \cite{Kaw98}), let $V$ be
the~$\bar{G}$-orbit of the~subvariety $S\subset\mathbb{P}^{8}$,
and let $r$ be the~number of irreducible components of
the~subvariety $V$. Then $\mathrm{deg}(V)=r\mathrm{deg}(S)$.

Arguing as in the~proofs of \cite[Theorem~1.10]{Kaw97} and
\cite[Theorem~1]{Kaw98}, we may assume that the only log canonical
centers of the log pair $(\mathbb{P}^{8},\lambda D)$ are
components of the subvariety $V$ (see \cite[Lemma~2.8]{ChSh09}).
Then it follows from \cite[Proposition~1.5]{Kaw97} that the
components of the subvariety $V$ are disjoint. In particular, if
$\mathrm{dim}(V)\geqslant 4$, then $r=1$. Put
$n=\dim(V)=\mathrm{dim}(S)$. Then $n\ne 7$ by
Lemma~\ref{lemma:no-semiinvariants}.

Let $\mathcal{I}_{V}$ be the~ideal sheaf of the subvariety
$V\subset\P^8$, and let $\Lambda$ be a general hyperplane in
$\P^8$. Put $H=\Lambda\vert_{V}$, $h_m=h^0(\OO_V(mH))$ and
$q_m=h^0(\OO_{\P^8}(m)\otimes\mathcal{I}_{V})$ for every
$m\in\mathbb{Z}$. It follows from the Shokurov--Nadel vanishing
theorem (see \cite[Theorem~9.4.8]{La04}) that
$$
q_m=h^0\Big(\OO_{\P^8}\big(m\big)\Big)-h_m={8+m\choose m}-h_m
$$
for every $m\geqslant 1$. In particular, if $n=0$, then
$r=h_1\leqslant 9$, which is impossible by
Lemma~\ref{lemma:no-semiinvariants}. Hence, we see that
$1\leqslant n\leqslant 6$.

It follows from \cite[Theorem~1]{Kaw98} that the~variety $V$ is
normal and has at most rational singularities. Moreover, it
follows from \cite[Theorem~1]{Kaw98} that for every positive
rational number $\epsilon>0$ there is an~effective
$\mathbb{Q}$-divisor $B_{V}$ on the~variety $V$ such that
$$
\Big(K_{\P^n}+\lambda D+\epsilon \Lambda\Big)\Big\vert_{V}\sim_{\mathbb{Q}} K_{V}+B_{V},%
$$
and $(V,B_{V})$ has Kawamata log terminal singularities. In
particular, taking $\epsilon$ sufficiently small, we may assume
that $K_{V}+B_{V}\sim_{\mathbb{Q}} (1-\nu)H$ for some positive
$\nu\in\mathbb{Q}$, because $\lambda<10/9$.

\begin{remark}
\label{remark:mod-9} One can show that any irreducible
representation $W$ of the group $G$ such that the center
$Z(G)\cong\Z_3$ acts non-trivially on $W$ has dimension $\dim(W)$
divisible by $9$. Therefore one has $h_i\equiv 0\mod 9$ for every
$i$ not divisible by $3$, since $Z(G)$ acts nontrivially on
$\Sym^i(U^{\vee})$ if $i$ is not divisible by $3$.
\end{remark}

\begin{remark}\label{remark:q1-q2}
One has $q_1=0$ since $U^{\vee}$ is irreducible and $q_2=0$ by
Lemma~\ref{lemma:splitting}.
\end{remark}

Put $d=H^n=\deg(V)$ and $H_V(m)=\chi(\OO_{V}(m))$. Then it follows
from the Shokurov--Nadel vanishing theorem (see
\cite[Theorem~9.4.8]{La04}) that $H_V(m)=h_m$ for every
$m\geqslant 1$. Recall that $H_V(m)$ is a Hilbert polynomial of
the subvariety $V$, which is a polynomial in $m$ of degree $n$
with leading coefficient $d/n!$.

\begin{lemma}
\label{lemma:binomial} For any non-negative integer $\delta$ one
has
$$
d=h_{\delta+n+1}-{n\choose 1}h_{\delta+n}+ {n\choose 2}h_{\delta+n-1}+\ldots+ (-1)^{n}h_{\delta+1}.%
$$
\end{lemma}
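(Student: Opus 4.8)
The plan is to exploit the fact that the Hilbert polynomial $H_V(m)$ has degree exactly $n$, so that its $(n+1)$-st finite difference vanishes identically, while its $n$-th finite difference is the constant $d=H^n$. Recall the standard finite-difference operator $(\Delta f)(m)=f(m+1)-f(m)$; iterating it gives
$$
(\Delta^{k}f)(m)=\sum_{j=0}^{k}(-1)^{k-j}{k\choose j}f(m+j).
$$
Since $H_V$ is a polynomial of degree $n$ with leading coefficient $d/n!$, one has $(\Delta^{n}H_V)(m)=d$ for all $m$ (a constant), because each application of $\Delta$ lowers the degree by one and multiplies the leading coefficient by the appropriate factorial factor, leaving $n!\cdot(d/n!)=d$ after $n$ steps. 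Thus for any integer $m$,
$$
d=\sum_{j=0}^{n}(-1)^{n-j}{n\choose j}H_V(m+j).
$$

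The key step is to convert this identity about $H_V$ into one about the dimensions $h_k$. By the Shokurov--Nadel vanishing theorem invoked just above the statement, we have $H_V(k)=h_k$ for every $k\geqslant 1$. Therefore, as long as every argument $m+j$ appearing in the sum is at least $1$, I may replace each $H_V(m+j)$ by $h_{m+j}$ without changing the value. For a non-negative integer $\delta$, I would set $m=\delta+1$, so that the arguments run from $\delta+1$ up to $\delta+n+1$, all of which are $\geqslant 1$. Substituting yields
$$
d=\sum_{j=0}^{n}(-1)^{n-j}{n\choose j}h_{\delta+1+j}
=h_{\delta+n+1}-{n\choose 1}h_{\delta+n}+{n\choose 2}h_{\delta+n-1}+\cdots+(-1)^{n}h_{\delta+1},
$$
which is exactly the claimed formula once one checks that the sign of the last term (the $j=0$ term) is $(-1)^{n}$ and the sign of the leading term (the $j=n$ term) is $+1$, matching the alternating pattern written in the statement.

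The only genuine subtlety is the requirement $k\geqslant 1$ in the vanishing theorem: the identity $(\Delta^{n}H_V)(m)=d$ holds for \emph{all} $m$ as an identity of polynomials, but the replacement $H_V(k)=h_k$ is licensed only for positive $k$. Choosing $m=\delta+1$ with $\delta\geqslant 0$ guarantees every index is at least $1$, so this causes no trouble; I expect this bookkeeping to be the main (and only) point requiring care. I would not need to invoke any representation-theoretic input here—Lemma~\ref{lemma:binomial} is purely a formal consequence of the polynomiality of $H_V$ together with the equality $H_V(m)=h_m$ for $m\geqslant 1$, and it is precisely this freedom to shift by an arbitrary $\delta$ that will later feed the \emph{binomial trick} of Lemma~\ref{lemma:qivj}.
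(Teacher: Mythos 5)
Your proof is correct and is essentially the paper's argument made explicit: the paper's entire proof is the phrase ``Induction by $n$,'' which amounts to the standard fact that the $n$-th finite difference of a degree-$n$ polynomial with leading coefficient $d/n!$ is the constant $d$, exactly the identity you derive. Your attention to the shift $m=\delta+1$ so that Shokurov--Nadel vanishing licenses the replacement $H_V(k)=h_k$ for all indices $k\geqslant 1$ is the right (and only) point of care.
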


\begin{proof}
Induction by $n$.
\end{proof}

\begin{lemma}
\label{lemma:divisibility} If $4\le n\le 5$, then $d$ is divisible
by $3$. If $n=6$, then $d$ is divisible by $9$.
\end{lemma}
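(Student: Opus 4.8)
The plan is to recover $d$ from the values $h_m$ via the finite--difference identity of Lemma~\ref{lemma:binomial} and then reduce modulo $3$ (or $9$), exploiting that almost every term drops out by Remark~\ref{remark:mod-9}. Taking $\delta=0$, Lemma~\ref{lemma:binomial} writes $d$ as the alternating binomial sum of $h_1,\dots,h_{n+1}$. By Remark~\ref{remark:mod-9} each $h_m$ with $3\nmid m$ is divisible by $9$, so modulo $3$ (and, for $n=6$, modulo $9$) only the terms $h_m$ with $3\mid m$ can contribute. Since $1\le n\le 6$, the only such indices in the range $1\le m\le n+1$ are $m=3$ and $m=6$.

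For $n=4$ this finishes the job at once. In $d=h_5-\binom{4}{1}h_4+\binom{4}{2}h_3-\binom{4}{3}h_2+h_1$ the terms $h_1,h_2,h_4,h_5$ vanish modulo $3$, while the surviving coefficient $\binom{4}{2}=6\equiv 0\pmod 3$ kills $h_3$, whence $d\equiv 0\pmod 3$. For $n=5$ and $n=6$ the same bookkeeping, after discarding the terms divisible by $9$, collapses to
\[
d\equiv h_6-h_3\pmod 3\quad (n=5),\qquad d\equiv 3\,(h_6-h_3)\pmod 9\quad (n=6).
\]
Thus both remaining divisibility statements are equivalent to the single congruence $h_3\equiv h_6\pmod 3$, which is where representation theory must enter.

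To prove $h_3\equiv h_6\pmod 3$ I would first pin down $h_3$. As $h_m=h^0(\OO_V(mH))$ is non-decreasing on the integral variety $V$, multiplication by a general linear form giving an injection, one has $h_3\ge h_2=45$ (here $h_2=\dim\Sym^2(U^{\vee})=45$ by Lemma~\ref{lemma:splitting} and $q_2=0$ from Remark~\ref{remark:q1-q2}). Since $h_3$ is the dimension of a quotient representation of $\Sym^3(U^{\vee})$, it is a partial sum of $\Delta_3=[5,160]$, so the bound forces $h_3\in\{160,165\}$, i.e. $h_3\equiv 1$ or $h_3\equiv 0\pmod 3$ according to whether the $160$-dimensional constituent does or does not survive in $H^0(\OO_V(3H))$. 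It then remains to read off $h_6\bmod 3$ from $\Delta_6=[4,15,24,2\times 80,3\times 240,3\times 480,640]$: modulo $3$ only the constituents of dimension $4$, $80$, $80$ and $640$ are nonzero, so $h_6\bmod 3$ is governed by which of these four occur in $H^0(\OO_V(6H))$. To tie this to $h_3$ I would use the $\bar G$-equivariant multiplication map $\Sym^3(U^{\vee})\otimes H^0(\OO_V(3H))\to H^0(\OO_V(6H))$ together with the binomial trick of Lemma~\ref{lemma:qivj}, which relates the sextic constituents restricting to zero on $V$ to the cubic ones.

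The main obstacle is precisely this last comparison. The finite--difference reduction is purely formal, but establishing $h_3\equiv h_6\pmod 3$ requires controlling the partial sums $h_3$ and $h_6$—equivalently, determining which irreducible subrepresentations of the spaces of cubics and of sextics restrict to zero on $V$—accurately enough to match their residues modulo $3$. This is where Lemma~\ref{lemma:splitting} and the binomial trick (Lemma~\ref{lemma:qivj}) carry the real weight; everything else is arithmetic with binomial coefficients.
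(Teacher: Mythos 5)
Your finite--difference reduction and the $n=4$ case are exactly right, and you have correctly identified that both remaining cases ($n=5$ and $n=6$) reduce to the single congruence $h_3\equiv h_6\pmod 3$. But that congruence is precisely where your argument stops: you propose to extract it from representation theory, via the possible partial sums $h_3\in\{160,165\}$, the residues of the constituents of $\Delta_6$, and an equivariant multiplication map $\Sym^3(U^{\vee})\otimes H^0(\OO_V(3H))\to H^0(\OO_V(6H))$ --- and you acknowledge yourself that this comparison is not carried out. As it stands this is a genuine gap, not a routine verification: nothing in the listed lemmas tells you which of the constituents of dimensions $4$, $80$, $80$, $640$ survive in $H^0(\OO_V(6H))$, so the residue of $h_6$ is not pinned down by the data you invoke.

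The point you are missing is that $h_3\equiv h_6\pmod 3$ comes for free from the \emph{same} finite--difference identity, used twice. Since $H_V$ is a polynomial of degree $n$, Lemma~\ref{lemma:binomial} holds for every shift $\delta$; applying it with $\delta=0$ and $\delta=1$ and subtracting kills $d$ and leaves the vanishing of the $(n+1)$-st finite difference, e.g.\ for $n=6$
$$
h_8-7h_7+21h_6-35h_5+35h_4-21h_3+7h_2-h_1=0 .
$$
Reducing modulo $9$ and discarding the terms with $3\nmid m$ by Remark~\ref{remark:mod-9} gives $21(h_6-h_3)\equiv 0\pmod 9$, hence $h_6\equiv h_3\pmod 3$, and then $d\equiv 15h_6-6h_3\equiv 15(h_6-h_3)\equiv 0\pmod 9$; the case $n=5$ is identical with $\binom{6}{k}$ in place of $\binom{7}{k}$. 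This is exactly the paper's proof: no representation-theoretic input beyond Remark~\ref{remark:mod-9} is needed, and in particular no analysis of which subrepresentations of $\Sym^3(U^{\vee})$ or $\Sym^6(U^{\vee})$ restrict to zero on $V$.
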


\begin{proof}
Suppose that $n=6$. Applying Lemma~\ref{lemma:binomial} with
$\delta=0$ and $\delta=1$, we get
\begin{equation}
\label{eq:binomial-6}
h_7-6h_6+15h_5-20h_4+15h_3-6h_2+h_1=d=h_8-6h_7+15h_6-20h_5+15h_4-6h_3+h_2,
\end{equation}
which gives $21h_6-21h_3\equiv 0 \mod 9$ by subtracting two
equalities in $(\ref{eq:binomial-6})$, reducing everything modulo
$9$, and using Remark~\ref{remark:mod-9}. Thus $h_6-h_3\equiv 0
\mod 3$, and $15h_6-6h_3\equiv 0 \mod 9$. The latter equality
combined with~$(\ref{eq:binomial-6})$ implies that $d\equiv 0 \mod
9$.

If $n=5$, a similar argument shows that $d\equiv 0 \mod 3$.

Finally, suppose that $n=4$. Applying Lemma~\ref{lemma:binomial}
for $\delta=0$, we get $h_5-4h_4+6h_3-4h_2+h_1=d$, which gives
$d\equiv 6h_3\equiv 0 \mod 3$, since $h_5$, $h_4$, $h_2$, and
$h_1$ are divisible by $3$ by Remark~\ref{remark:mod-9}.
\end{proof}

\begin{remark}\label{remark:dim-6-small-q}
If $n=6$, then $q_3=0$. Indeed, if $q_3>0$, then $q_3>2$ by
Lemma~\ref{lemma:splitting}, so that $d<9$, which is impossible by
Lemma~\ref{lemma:divisibility}. Similarly, if $n=6$ and $q_4>0$,
one has $d=9$
\end{remark}

\begin{remark}\label{remark:increase}
One has $h_m\le h_{m+1}$ and $q_m\le q_{m+1}$ for all $m\geqslant
1$.
\end{remark}

Let $\Lambda_{1},\Lambda_{2},\ldots,\Lambda_{n}$ be general
hyperplanes in $\P^8$. Put $\Pi_{j}=\Lambda_{1}\cap\ldots\cap
\Lambda_{j}$, $V_{j}=V\cap\Pi_{j}$, $H_{j}=V_{j}\cap H$, and
$B_{V_{j}}=B_{V}\vert_{V_{j}}$ for every $j\in\{1,\ldots,n\}$. Put
$V_{0}=V$, $B_{V_{0}}=B_{V}$, $H_{0}=H$, $\Pi_0=\P^8$. For every
$j\in\{0,1,\ldots,n\}$, let $\mathcal{I}_{V_{j}}$ be the ideal
sheaf of the subvariety $V_{j}\subset\Pi_{j}$. Recall that
$\Pi_{j}\cong\P^{8-j}$ and put
$q_i(V_j)=h^0(\mathcal{O}_{\Pi_{j}}(i)\otimes\mathcal{I}_{V_{j}})$
for every $j\in\{0,1,\ldots,n\}$.

\begin{lemma}\label{lemma:qivj}
Suppose that $i\geqslant j+1$ and $j\in\{1,\ldots,n\}$. Then
$$
q_i(V_j)=q_i-{j\choose 1}q_{i-1}+{j\choose 2}q_{i-2}-\ldots+(-1)^jq_{i-j}.%
$$
\end{lemma}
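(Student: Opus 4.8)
The plan is to prove the formula by induction on $j$, where the single-step relation expresses $q_i(V_{j})$ in terms of $q_i(V_{j-1})$ and $q_{i-1}(V_{j-1})$, after which the stated binomial identity follows by unwinding the recursion with Pascal's rule. The conceptual heart of the matter is the following exact sequence. Since $\Lambda_{j}$ is a general hyperplane, restriction to $\Pi_{j-1}$ gives $\Pi_{j}=\Lambda_{j}\cap\Pi_{j-1}\cong\P^{8-j}$ sitting inside $\Pi_{j-1}\cong\P^{9-j}$, and $V_{j}=V_{j-1}\cap\Lambda_{j}$. I would consider the short exact sequence of sheaves on $\Pi_{j-1}$
\begin{equation}
\label{eq:qivj-ses}
0\to\mathcal{O}_{\Pi_{j-1}}(i-1)\otimes\mathcal{I}_{V_{j-1}}\to\mathcal{O}_{\Pi_{j-1}}(i)\otimes\mathcal{I}_{V_{j-1}}\to\mathcal{O}_{\Pi_{j}}(i)\otimes\mathcal{I}_{V_{j}}\to 0,
\end{equation}
obtained by twisting the structure sequence of the hyperplane $\Lambda_{j}$ by $\mathcal{I}_{V_{j-1}}(i)$; the right-hand term is the restricted ideal sheaf because $\Lambda_{j}$ meets $V_{j-1}$ transversally for a general choice.

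The main step is to verify that the long exact cohomology sequence of \eqref{eq:qivj-ses} degenerates to the short exact sequence of global sections
\begin{equation}
\label{eq:qivj-step}
0\to H^0\big(\mathcal{O}_{\Pi_{j-1}}(i-1)\otimes\mathcal{I}_{V_{j-1}}\big)\to H^0\big(\mathcal{O}_{\Pi_{j-1}}(i)\otimes\mathcal{I}_{V_{j-1}}\big)\to H^0\big(\mathcal{O}_{\Pi_{j}}(i)\otimes\mathcal{I}_{V_{j}}\big)\to 0,
\end{equation}
which yields the recursion $q_i(V_{j})=q_i(V_{j-1})-q_{i-1}(V_{j-1})$. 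Surjectivity on the right requires the vanishing $H^1(\mathcal{O}_{\Pi_{j-1}}(i-1)\otimes\mathcal{I}_{V_{j-1}})=0$. I expect this to be the main obstacle and the place that demands real care: one wants to invoke Shokurov--Nadel vanishing, exactly as the surrounding text does to establish $q_m={8+m\choose m}-h_m$, applied on the linear subspace $\Pi_{j-1}$ after slicing the log pair by the general hyperplanes $\Lambda_1,\dots,\Lambda_{j-1}$. The numerical hypothesis $i\ge j+1$ is what guarantees the relevant twist $i-1\ge j$ stays in the range where the multiplier-ideal vanishing applies to $V_{j-1}\subset\Pi_{j-1}\cong\P^{9-j}$; I would track this inequality carefully through the induction to be sure the vanishing is available at every stage, and that the base case $j=1$ (giving $q_i(V_1)=q_i-q_{i-1}$ for $i\ge 2$) is covered by Remark~\ref{remark:q1-q2} and the ambient vanishing.

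Granting the recursion $q_i(V_{j})=q_i(V_{j-1})-q_{i-1}(V_{j-1})$ together with the base identity $q_i(V_0)=q_i$, the closed formula follows by a routine induction. Writing the recursion as a finite-difference operator $\Delta$ acting on the index $i$, one has $q_i(V_{j})=(\Delta^{j}q)_i$, and expanding $\Delta^{j}=(1-S)^{j}$ with $S$ the shift $q_i\mapsto q_{i-1}$ via the binomial theorem produces precisely
$$
q_i(V_{j})=\sum_{k=0}^{j}(-1)^{k}{j\choose k}q_{i-k}=q_i-{j\choose 1}q_{i-1}+{j\choose 2}q_{i-2}-\ldots+(-1)^jq_{i-j},
$$
where the binomial coefficients assemble by Pascal's rule ${j\choose k}={j-1\choose k}+{j-1\choose k-1}$ in the inductive step. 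This completes the plan; the only substantive content beyond bookkeeping is the vanishing needed for the surjectivity in \eqref{eq:qivj-step}.
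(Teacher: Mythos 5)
Your argument is correct and lands on exactly the same one-step recursion $q_i(V_j)=q_i(V_{j-1})-q_{i-1}(V_{j-1})$ that the paper iterates, but you reach it by a slightly different and in fact leaner route. The paper applies Nadel--Shokurov vanishing twice: once to the log pairs $(V_j,B_{V_j})$ to kill $h^1(\mathcal{O}_{V_j}(iH_j))$ and thereby relate $h^0(\mathcal{O}_{V_{j-1}}(iH_{j-1}))$ to $h^0(\mathcal{O}_{V_j}(iH_j))$, and once to $(\Pi_j,\lambda D\vert_{\Pi_j})$ to express $q_i(V_j)$ as $\binom{8-j+i}{i}-h^0(\mathcal{O}_{V_j}(iH_j))$; the recursion then drops out of a small binomial-coefficient computation. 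You instead twist the structure sequence of the hyperplane by $\mathcal{I}_{V_{j-1}}(i)$ and read off the recursion from the resulting sequence of global sections, so you only need the single vanishing $h^1(\mathcal{O}_{\Pi_{j-1}}(i-1)\otimes\mathcal{I}_{V_{j-1}})=0$, which is precisely the paper's second application of Nadel--Shokurov (with the twist $i-1\geqslant j$ in range exactly because $i\geqslant j+1$, as you note). You do need the genericity of $\Lambda_j$ to identify the quotient sheaf with $\mathcal{O}_{\Pi_j}(i)\otimes\mathcal{I}_{V_j}$, and, like the paper, you are implicitly using that the multiplier ideal of $(\Pi_{j-1},\lambda D\vert_{\Pi_{j-1}})$ is $\mathcal{I}_{V_{j-1}}$, which is guaranteed by the earlier reduction to the case where the only log canonical centers are the components of $V$. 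The final unwinding via $(1-S)^j$ and Pascal's rule is the same bookkeeping as the paper's iteration of its equation $(\ref{equation:binom})$. Net effect: your version avoids the vanishing on the slices $V_j$ themselves and the auxiliary identity $(\ref{equation:vanishin-1})$, at the cost of having to justify the exactness of the restricted ideal-sheaf sequence; both are complete proofs.
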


\begin{proof}
For every $j\in\{0,1,\ldots,n\}$, it follows from the adjunction
formula that
$$
K_{V_{j}}+B_{V_{j}}\sim_{\mathbb{Q}} (j+1-\nu)H_{j},%
$$
because  $K_{V}+B_{V}\sim_{\mathbb{Q}} (1-\nu)H$ and $(V_{j},
B_{V_{j}})$ has at most Kawamata log terminal singularities.
Applying the Nadel--Shokurov vanishing theorem to the log pair
$(V_{j}, B_{V_{j}})$, we see that
$h^{1}(\mathcal{O}_{V_{j}}(i))=0$ for every $i\geqslant j+1$ and
every $j\in\{0,1,\ldots,n\}$. Thus, we have
\begin{equation}
\label{equation:vanishin-1}
h^{0}\Big(\mathcal{O}_{V_{j}}\big((i+1)H_{j}\big)\Big)-h^{0}\Big(\mathcal{O}_{V_{j}}\big(iH_{j}\big)\Big)=h^{0}\Big(\mathcal{O}_{V_{j+1}}\big((i+1)H_{j+1}\big)\Big)
\end{equation}
for every $i\geqslant j+1$ and every $j\in\{0,\ldots,n-1\}$. Now
applying the Nadel--Shokurov vanishing theorem to the log pair
$(\Pi_{j},\lambda D\vert_{\Pi_{j}})$, we see that
$h^{1}(\mathcal{O}_{\Pi_{j}}(i)\otimes\mathcal{I}_{V_{j}})=0$ for
every $i\geqslant j+1$ and every $j\in\{0,1,\ldots,n\}$. This
implies that
\begin{equation}
\label{equation:vanishin-2}
q_i(V_{j})={8-j+i\choose i}-h^{0}\Big(\mathcal{O}_{V_{j}}\big(iH_{j}\big)\Big)%
\end{equation}
for every $i\geqslant j+1$ and every $j\in\{0,1,\ldots,n\}$.
Combining $(\ref{equation:vanishin-1})$ and
$(\ref{equation:vanishin-2})$, we have
$$
q_{i}(V_{j-1})-q_{i-1}(V_{j-1})={9-j\choose i}-h^{0}\Big(\mathcal{O}_{V_{j-1}}\big(iH_{j-1}\big)\Big)-{8-j+i\choose i-1}+h^{0}\Big(\mathcal{O}_{V_{j-1}}\big((i-1)H_{j-1}\big)\Big)=%
$$
$$
={9-j\choose i}-{8-j+i\choose i-1}-h^{0}\Big(\mathcal{O}_{V_{j}}\big(iH_{j}\big)\Big)={8-j+i\choose i}-h^{0}\Big(\mathcal{O}_{V_{j}}\big(iH_{j}\big)\Big)=q_{i}(V_{j})%
$$
for every $i\geqslant j+1$ and every $j\in\{1,\ldots,n\}$. Thus,
we see that
\begin{equation}
\label{equation:binom}
q_{i}(V_{j})=q_{i}(V_{j-1})-q_{i-1}(V_{j-1})
\end{equation}
for every $i\geqslant j+1$ and every $j\in\{1,\ldots,n\}$.
Iterating $(\ref{equation:binom})$, we obtain the required
equality.
\end{proof}

Lemma~\ref{lemma:qivj} allows one to obtain bounds on the numbers
$q_i$.

\begin{remark}\label{remark:qivj-stupid}
There are trivial bounds $0\le q_i(V_j)<{8-j+i\choose i}$.
\end{remark}

Recall that $q_1=q_2=0$ by Remark~\ref{remark:q1-q2}, and $q_3=0$
if $n=6$ by Remark~\ref{remark:dim-6-small-q}. Therefore,
Remark~\ref{remark:qivj-stupid} implies

\begin{corollary}\label{corollary:qivj-stupid}
If $n=5$, one has
\begin{equation}\label{eq:stupid-5}
0\le q_4-3q_3<126.
\end{equation}
If $n=6$, one has
\begin{equation}\label{eq:stupid-6}
0\le q_5-4q_4<126.
\end{equation}
\end{corollary}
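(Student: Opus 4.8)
The plan is to read both inequalities off directly from Lemma~\ref{lemma:qivj}, choosing the indices $i$ and $j$ so that the alternating binomial sum collapses once the low-order terms are killed by the vanishing $q_1=q_2=0$ (and $q_3=0$ when $n=6$), and then feeding the result into the trivial bounds of Remark~\ref{remark:qivj-stupid}. The whole argument is a substitution, so there is no genuine obstacle; the only thing to get right is which terms survive and the fact that both upper bounds happen to equal $126$.

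For the case $n=5$, I would apply Lemma~\ref{lemma:qivj} with $j=3$ and $i=4$, which is legitimate since $3\le n=5$ and $i=4\ge j+1$. This gives
$$
q_4(V_3)=q_4-{3\choose 1}q_3+{3\choose 2}q_2-{3\choose 3}q_1=q_4-3q_3,
$$
where the last equality uses $q_1=q_2=0$ from Remark~\ref{remark:q1-q2}. Now Remark~\ref{remark:qivj-stupid} reads $0\le q_4(V_3)<{8-3+4\choose 4}={9\choose 4}=126$, which is exactly $(\ref{eq:stupid-5})$.

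For the case $n=6$, I would instead take $j=4$ and $i=5$ (again $4\le n=6$ and $i=5\ge j+1$), so that Lemma~\ref{lemma:qivj} yields
$$
q_5(V_4)=q_5-{4\choose 1}q_4+{4\choose 2}q_3-{4\choose 3}q_2+{4\choose 4}q_1=q_5-4q_4,
$$
since $q_1=q_2=0$ by Remark~\ref{remark:q1-q2} and now also $q_3=0$ by Remark~\ref{remark:dim-6-small-q}. Substituting into Remark~\ref{remark:qivj-stupid} gives $0\le q_5(V_4)<{8-4+5\choose 5}={9\choose 5}=126$, which is $(\ref{eq:stupid-6})$. The only point worth flagging is that the surviving coefficients are precisely $3$ and $4$ and that the two binomial bounds coincide, ${9\choose 4}={9\choose 5}=126$, which is why the same constant appears in both parts of Corollary~\ref{corollary:qivj-stupid}.
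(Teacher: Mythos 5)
Your proof is correct and is exactly the argument the paper intends: the paper states the corollary as an immediate consequence of Lemma~\ref{lemma:qivj}, Remark~\ref{remark:qivj-stupid}, and the vanishings $q_1=q_2=0$ (plus $q_3=0$ for $n=6$), and your choices $(j,i)=(3,4)$ and $(4,5)$ fill in the implicit computation correctly, including the verification that both binomial bounds equal $\binom{9}{4}=\binom{9}{5}=126$.
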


Playing with the numbers $q_i(V_j)$, we can obtain

\begin{lemma}\label{lemma:qivj-curve}
Suppose that $n\ge 4$. Then
$${9\choose n}-\frac{nd}{2}>q_n(V_{n-1})\geqslant {9\choose n}-nd-1.$$
\end{lemma}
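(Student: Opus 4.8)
The plan is to reduce the whole statement to a Riemann--Roch computation on the curve $C:=V_{n-1}$ cut out of $V$ by the $n-1$ general hyperplanes $\Lambda_1,\dots,\Lambda_{n-1}$. Since $n\ge 4$ forces $r=1$, the subvariety $V$ is irreducible; as $V$ is normal (hence smooth in codimension one) of dimension $n\ge 4$, repeated application of Bertini's theorems shows that $C$ is a smooth irreducible curve in $\Pi_{n-1}\cong\P^{9-n}$, and slicing by general hyperplanes preserves the degree, so $\deg C=H^n=d$. Let $g$ be the genus of $C$. The identity $q_n(V_{n-1})={9\choose n}-h^0(\OO_{V_{n-1}}(nH_{n-1}))$ already established in the proof of Lemma~\ref{lemma:qivj} (namely $(\ref{equation:vanishin-2})$ with $i=n$, $j=n-1$) reduces the lemma to computing $h^0(\OO_C(nH_{n-1}))$. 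The Nadel--Shokurov vanishing applied to $(V_{n-1},B_{V_{n-1}})$ gives $h^1(\OO_C(nH_{n-1}))=0$, so Riemann--Roch on the smooth curve $C$ yields
$$
h^0\big(\OO_C(nH_{n-1})\big)=\chi\big(\OO_C(nH_{n-1})\big)=nd+1-g,
$$
and therefore $q_n(V_{n-1})={9\choose n}-nd-1+g$.

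Both inequalities of the lemma then become statements about the genus $g$. The lower bound ${9\choose n}-nd-1\le q_n(V_{n-1})$ is immediate, being equivalent to $g\ge 0$. For the upper bound I would use the adjunction relation $K_C+B_C\sim_{\Q}(n-\nu)H_{n-1}$ coming from the proof of Lemma~\ref{lemma:qivj} with $j=n-1$, where $B_C=B_{V_{n-1}}$ is effective and $\nu>0$. Taking degrees and using $\deg K_C=2g-2$, $\deg B_C\ge 0$, and $\deg H_{n-1}=d>0$, one gets
$$
2g-2\le 2g-2+\deg B_C=(n-\nu)d<nd,
$$
so $g<nd/2+1$. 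Substituting into $q_n(V_{n-1})={9\choose n}-nd-1+g$ gives $q_n(V_{n-1})<{9\choose n}-nd-1+nd/2+1={9\choose n}-nd/2$, as required.

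The computation itself is short; the points that require care are geometric. The main thing to verify is that $C$ really is a smooth irreducible curve, so that the genus is defined and the relation $\deg K_C=2g-2$ applies: this is exactly where irreducibility ($r=1$ for $n\ge 4$), normality of $V$, and a Bertini argument avoiding the singular locus of $V$ (of dimension $\le n-2$, hence disjoint from a general $\Pi_{n-1}$) all enter. The only other subtlety is that the upper bound is \emph{strict}, and this is guaranteed precisely by the strict positivity $\nu>0$ forced by $\lambda<10/9$; without it one would obtain only $g\le nd/2+1$ and lose the strict inequality ${9\choose n}-nd/2>q_n(V_{n-1})$.
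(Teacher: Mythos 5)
Your proof is correct and follows essentially the same route as the paper's: slice down to the smooth irreducible curve $V_{n-1}$, compute $q_n(V_{n-1})={9\choose n}-nd-1+g$ via Riemann--Roch with the vanishing of $h^1$, and then get the two inequalities from $g\geqslant 0$ and from $2g-2\leqslant(n-\nu)d<nd$ coming from adjunction. The only cosmetic difference is that you justify $h^1(\OO_{V_{n-1}}(nH_{n-1}))=0$ by Nadel--Shokurov vanishing on the pair $(V_{n-1},B_{V_{n-1}})$, whereas the paper observes that $nH_{n-1}$ is non-special because its degree $nd$ exceeds $2g-2$; both are valid.
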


\begin{proof}
Recall that the variety $V_{n-1}\subset\P^{8-n+1}$ is a smooth
curve of degree $d$, since $V$ is normal. Since $n\geqslant 4$, we
see that $V_{n-1}$ is irreducible. Let $g$ be the genus of the
curve $V_{n-1}$. It follows from the adjunction formula that
$K_{V_{n-1}}+B_{V_{n-1}}\sim_{\mathbb{Q}} (n-\nu)H_{n-1}$, because
$K_{V}+B_{V}\sim_{\mathbb{Q}} (1-\nu)H$. In particular, one has
$2g-2<dn$.

Applying the Nadel--Shokurov vanishing theorem to the log pair
$(\Pi_{n-1},\lambda D\vert_{\Pi_{n-1}})$, we see that
$$
q_m(V_{n-1})={8-n+1+m\choose
m}-h^{0}\Big(\mathcal{O}_{V_{n-1}}\big(mH_{n-1}\big)\Big)%
$$
for every $m\geqslant n$. Since $2g-2<dn$, the divisor $nH_{n-1}$
is non-special. Therefore, it follows from the Riemann--Roch
theorem that
$$
q_n(V_{n-1})={9\choose n}-nd+g-1,%
$$
which implies the required inequalities, since $2g-2<dn$ and
$g\geqslant 0$.
\end{proof}

Combining Lemmas~\ref{lemma:qivj-curve} and~\ref{lemma:qivj} and
recalling the trivial bounds from Remark~\ref{remark:qivj-stupid},
we obtain

\begin{corollary}\label{corollary:curve}
If $n=4$, then
\begin{equation}\label{eq:curve-4}
\max\big(0, 125-4d\big)\le q_4-3q_3\le 125-2d.
\end{equation}
If $n=5$, then
\begin{equation}\label{eq:curve-5}
\max\big(0, 125-5d\big)\le q_5-4q_4+6q_3\le 126-\frac{5d}{2}.
\end{equation}
If $n=6$, then
\begin{equation}\label{eq:curve-6}
0\le q_6-5q_5+10q_4-10q_3\le 83-3d.
\end{equation}
\end{corollary}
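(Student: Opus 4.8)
The plan is to treat the three cases $n=4,5,6$ uniformly by feeding a single instance of Lemma~\ref{lemma:qivj} into the curve estimate of Lemma~\ref{lemma:qivj-curve}. For each $n$ the relevant instance is the one with $j=n-1$ and $i=n$ (note $i=j+1\ge j+1$ and $j=n-1\in\{1,\ldots,n\}$ since $n\ge 4$, so the hypotheses are met), which expresses
$$
q_n(V_{n-1})=q_n-\binom{n-1}{1}q_{n-1}+\binom{n-1}{2}q_{n-2}-\cdots+(-1)^{n-1}q_1.
$$
First I would substitute $q_1=q_2=0$ from Remark~\ref{remark:q1-q2}, which kills the final two terms (the ones carrying $q_2$ and $q_1$). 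What survives is exactly the alternating combination in the statement: $q_4-3q_3$ for $n=4$, $q_5-4q_4+6q_3$ for $n=5$, and $q_6-5q_5+10q_4-10q_3$ for $n=6$ (in the last case $q_3=0$ too by Remark~\ref{remark:dim-6-small-q}, so keeping that term does no harm). Thus each of the three middle quantities in the corollary is literally $q_n(V_{n-1})$.

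Next I would quote Lemma~\ref{lemma:qivj-curve}, evaluating $\binom{9}{n}$ to $126,126,84$ for $n=4,5,6$. This turns the two-sided estimate $\binom{9}{n}-\tfrac{nd}{2}>q_n(V_{n-1})\ge\binom{9}{n}-nd-1$ into $126-2d>q_4(V_3)\ge 125-4d$, into $126-\tfrac{5d}{2}>q_5(V_4)\ge 125-5d$, and into $84-3d>q_6(V_5)\ge 83-6d$. For the upper bounds I would invoke the integrality of $q_n(V_{n-1})$ and of $d$: when $n$ is \emph{even} the quantity $\tfrac{nd}{2}$ is itself an integer, so the strict inequality sharpens to the stated $q_4-3q_3\le 125-2d$ and $q_6-5q_5+10q_4-10q_3\le 83-3d$; when $n=5$ the half-integer $\tfrac{5d}{2}$ need not be an integer, so I would simply retain the strict bound in the non-strict form $q_5-4q_4+6q_3\le 126-\tfrac{5d}{2}$ without attempting to improve it.

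For the lower bounds I would combine the curve estimate $q_n(V_{n-1})\ge\binom{9}{n}-nd-1$ with the trivial bound $q_n(V_{n-1})\ge 0$ of Remark~\ref{remark:qivj-stupid} and take whichever is larger, producing the bounds $\max\big(0,125-4d\big)$ and $\max\big(0,125-5d\big)$ for $n=4,5$; in the case $n=6$ the trivial bound $0$ is the one recorded in the statement. Since every step is a substitution followed by an integrality observation, I do not anticipate any genuine obstacle here. The only point requiring a moment of care is the parity bookkeeping in the upper bound, namely noticing that $\tfrac{nd}{2}\in\Z$ exactly for the even values $n=4,6$, which is precisely what lets the strict curve inequality collapse to the clean integer bound stated in those two cases while forcing the weaker non-strict form for $n=5$.
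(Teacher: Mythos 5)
Your proposal is correct and follows exactly the route the paper intends: the corollary is stated there as an immediate consequence of combining Lemma~\ref{lemma:qivj} (with $j=n-1$, $i=n$), Lemma~\ref{lemma:qivj-curve}, Remarks~\ref{remark:q1-q2} and~\ref{remark:qivj-stupid}, and the integrality sharpening of the strict upper bound when $nd/2\in\Z$. Nothing is missing.
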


As a by-product of Corollary~\ref{corollary:curve}, we get

\begin{corollary}\label{corollary:degree-bounds}
If $n=4$, then $d\le 62$. If $n=5$, then $d\le 50$. If $n=6$, then
$d\le 27$.
\end{corollary}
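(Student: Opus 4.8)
The plan is to read the three bounds straight off Corollary~\ref{corollary:curve}, using only that in each case the alternating sum of the $q_i$ sitting between the two displayed bounds is nonnegative. This nonnegativity is already built into Corollary~\ref{corollary:curve}: the left-hand sides are $\max(0,\,\cdot\,)$ for $n=4,5$ and literally $0$ for $n=6$. Conceptually it comes from Lemma~\ref{lemma:qivj}, which identifies $q_4-3q_3$, $q_5-4q_4+6q_3$ and $q_6-5q_5+10q_4-10q_3$ with $q_n(V_{n-1})\ge 0$ once one substitutes $q_1=q_2=0$ (Remark~\ref{remark:q1-q2}), so no further input is needed.

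Granting this, I would argue case by case by forcing the upper bound in Corollary~\ref{corollary:curve} to be nonnegative. For $n=6$ the chain $0\le q_6-5q_5+10q_4-10q_3\le 83-3d$ gives $3d\le 83$. For $n=5$ the chain with upper bound $126-5d/2$ gives $5d\le 252$. For $n=4$ the upper bound $125-2d$ gives $2d\le 125$. Dividing and rounding down---here I use that $d=H^n=\deg(V)$ is a positive integer---yields $d\le 27$, $d\le 50$ and $d\le 62$ respectively, as claimed.

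Since all the analytic content (Riemann--Roch on the curve $V_{n-1}$ in Lemma~\ref{lemma:qivj-curve}, the binomial identity of Lemma~\ref{lemma:qivj}, and the trivial bounds of Remark~\ref{remark:qivj-stupid}) has already been packaged into Corollary~\ref{corollary:curve}, I expect the proof to be essentially immediate and foresee no genuine obstacle. The single point that requires attention is the integrality step: each upper bound ($83/3$, $252/5$, $125/2$) is non-integral, so one cannot skip invoking that $d\in\Z_{>0}$ when passing to $d\le 27$, $d\le 50$, $d\le 62$.
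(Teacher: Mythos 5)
Your proposal is correct and is exactly the paper's (implicit) argument: the corollary is stated there as an immediate by-product of Corollary~\ref{corollary:curve}, obtained by noting that the middle alternating sum is nonnegative, forcing the upper bounds $125-2d$, $126-\tfrac{5d}{2}$ and $83-3d$ to be nonnegative, and then rounding down using integrality of $d$. No discrepancy to report.
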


The above restrictions reduce the problem to a combinatorial
question of finding all polynomials $H_V$ of degree $n$ with a
leading coefficient $d/n!$, such that $h_m=H_V(m)\in\Sigma_m$ for
sufficiently many $m\geqslant 1$, and such that the numbers $h_m$
and $q_m=h^0(\OO_{\P^8}(m))-h_m$ satisfy the conditions arising
from Lemma~\ref{lemma:divisibility},
Corollaries~\ref{corollary:qivj-stupid}, \ref{corollary:curve}
and~\ref{corollary:degree-bounds}, and Remarks~\ref{remark:q1-q2},
\ref{remark:dim-6-small-q} and~\ref{remark:increase}. This can be
done in a straighforward way, although the number of cases to be
considered is so large that we had to delegate this part of the
proof to a simple computer program. Finally, we get the following
four lemmas which we leave without proofs.

\begin{lemma}\label{lemma:python-3}
There are no polynomials $\mathrm{H}(m)$ of degree $n\le 3$ such
that the values $h_m=\mathrm{H}(m)$ are in $\Sigma_m$ for $1\le
m\le 6$ and $h_i\le h_{i+1}$ for $1\le i\le 5$.
\end{lemma}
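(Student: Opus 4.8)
The plan is to treat Lemma~\ref{lemma:python-3} as a finite verification, driven by the single structural fact that a polynomial of degree $n\le 3$ has vanishing fourth finite difference. First I would record the sets $\Sigma_m$ for $1\le m\le 6$, all of which are explicitly determined by Lemma~\ref{lemma:splitting} together with the cases $\Delta_1=[9]$ (since $U^{\vee}$ is irreducible) and $\Delta_2=[45]$. The small ones are
\[\Sigma_1=\{0,9\},\quad \Sigma_2=\{0,45\},\quad \Sigma_3=\{0,5,160,165\},\]
\[\Sigma_4=\{0,45,180,225,270,315,450,495\},\]
while $\Sigma_5$ and $\Sigma_6$ are much larger finite sets of partial sums (the repeated dimensions in $\Delta_6$ mean a given value may be used several times). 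A polynomial $\mathrm{H}$ of degree $n\le 3$ is uniquely determined by the four values $h_1,h_2,h_3,h_4$, and for such a polynomial the fourth finite difference vanishes identically, so the remaining two values are forced:
\[h_5=4h_4-6h_3+4h_2-h_1,\qquad h_6=4h_5-6h_4+4h_3-h_2.\]
Hence the candidate polynomials are in bijection with the $4$-tuples $(h_1,h_2,h_3,h_4)\in\Sigma_1\times\Sigma_2\times\Sigma_3\times\Sigma_4$, and proving the lemma reduces to checking that no such tuple yields $h_5\in\Sigma_5$ and $h_6\in\Sigma_6$ while also satisfying $h_1\le h_2\le\cdots\le h_6$. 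A constant candidate would need its value to lie in $\Sigma_1\cap\Sigma_2=\{0\}$, forcing $\mathrm{H}\equiv 0$, which carries no well-defined degree and is excluded; thus I may assume $\mathrm{H}$ is nonconstant.

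Next I would prune this finite search by hand, exploiting monotonicity and the small sizes of $\Sigma_1,\Sigma_2,\Sigma_3$. Since $h_1\in\{0,9\}$, $h_2\in\{0,45\}$ and $h_1\le h_2$, only the pairs $(0,0)$, $(0,45)$ and $(9,45)$ survive. The constraint $h_3\ge h_2$ then forces $h_3\in\{160,165\}$ whenever $h_2=45$, and $h_4\ge h_3$ confines $h_4$ to the upper part of $\Sigma_4$. For each of the resulting handful of triples $(h_1,h_2,h_3)$ and each admissible $h_4$, I would compute $h_5$ and $h_6$ from the recurrence above; a large fraction come out negative or violate $h_4\le h_5\le h_6$, and are discarded immediately (for instance $(9,45,160,180)$ gives $h_5=-69$).

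The genuine obstacle — and the reason this step is delegated to a computer in the paper — is that the surviving candidates must be tested for membership in the large sets $\Sigma_5$ and $\Sigma_6$. Because $\Delta_5$ and $\Delta_6$ have many components, with repetitions in $\Delta_6$, the sets of partial sums are sizeable and there is no shortcut beyond enumerating them; deciding whether the forced values $h_5$ and $h_6$ actually occur as such sums is the laborious core of the argument. Carrying this out for each surviving tuple shows that every branch fails at least one of the three conditions (monotonicity, $h_5\in\Sigma_5$, or $h_6\in\Sigma_6$), so no polynomial $\mathrm{H}$ of degree $n\le 3$ meets all the requirements, which is precisely the assertion of Lemma~\ref{lemma:python-3}. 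The only conceptual ingredient is the vanishing fourth difference; the bookkeeping over $\Sigma_5$ and $\Sigma_6$ is what makes a short computer enumeration the natural way to finish.
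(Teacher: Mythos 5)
Your overall strategy --- reduce to a finite check by noting that a polynomial of degree $\le 3$ is determined by $h_1,h_2,h_3,h_4$ and that the vanishing of the fourth finite difference forces $h_5=4h_4-6h_3+4h_2-h_1$ and $h_6=4h_5-6h_4+4h_3-h_2$ --- is exactly the right way to organize the computation that the paper silently delegates to a computer (the paper gives no proof of this lemma at all). Your sets $\Sigma_1,\dots,\Sigma_4$ are correct, and the pruning by monotonicity is fine. The problem is with your final claim that ``every branch fails at least one of the three conditions'': it does not. The tuple $(h_1,\dots,h_6)=(0,0,165,450,810,1200)$ survives all of your tests: it is monotone, $165=5+160\in\Sigma_3$, $450=180+270\in\Sigma_4$, $810=270+540\in\Sigma_5$, and $1200=240+480+480\in\Sigma_6$. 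So the enumeration you describe does not come out empty, and as literally stated (and as literally proved by you) the lemma would be false.

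The resolution is a hypothesis that the lemma's statement omits but that the surrounding text supplies: these are meant to be Hilbert polynomials, with leading coefficient $d/n!$ for a \emph{positive} integer $d$ (equivalently, $\Delta^nH>0$ for the exact degree $n$). The surviving tuple above corresponds to the cubic with $\Delta^3H(1)=h_4-3h_3+3h_2-h_1=-45$, i.e.\ leading coefficient $-15/2$, so it is killed by positivity; and its degree-$\le 2$ truncation $(0,0,165)$ forces $h_4=495$, $h_5=990\notin\Sigma_5$, so that branch dies too. (Alternatively, one can import from Remark~\ref{remark:q1-q2} the constraints $h_1=9$, $h_2=45$, which also eliminate this tuple.) You need to state and use one of these extra constraints explicitly; without it your concluding sentence is simply not what the enumeration yields. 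With it, your plan is complete in outline, though of course the $\Sigma_5$ and $\Sigma_6$ membership tests still have to be carried out case by case, as you acknowledge.
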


\begin{lemma}\label{lemma:python-4}
If $\mathrm{H}(m)$ is a polynomial of degree $n=4$ with a leading
coefficient $d/n!$ with $d\leqslant 62$ and $d$ divisible by $3$,
such that the values $h_m=\mathrm{H}(m)$ are in $\Sigma_m$ for
$1\le m\le 6$, and the numbers $h_m$ and $q_m={8+m\choose m}-h_m$
satisfy the bounds of Remark~\ref{remark:increase} and
$(\ref{eq:curve-4})$, then $d=36$, $q_1=q_2=q_3=0$, $q_4=45$,
$q_5=270$.
\end{lemma}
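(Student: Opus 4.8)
The plan is to recast the hypotheses as a bounded search over the triple $(d,q_3,q_4)$ and then eliminate all but one possibility. First, since $q_1=q_2=0$ by Remark~\ref{remark:q1-q2}, we get $h_1={9\choose 1}=9$ and $h_2={10\choose 2}=45$. A polynomial of degree $4$ is determined by its leading coefficient $d/24$ together with the four values $h_1,h_2,h_3,h_4$; hence, once $d$ and the remaining two values $h_3=165-q_3$, $h_4=495-q_4$ are chosen, the polynomial $\mathrm{H}$ is uniquely fixed. Its constant fourth finite difference equals $4!\cdot d/24=d$, so $h_5,h_6$, and therefore $q_5,q_6$, are explicit affine functions of $(d,q_3,q_4)$; in particular $q_5=126+4q_4-6q_3-d$. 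Thus everything reduces to deciding which triples $(d,q_3,q_4)$ pass all the imposed conditions.

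Next I would cut the parameters down to a short list. By Lemma~\ref{lemma:splitting} the set $\Sigma_3$ consists of the partial sums of $[5,160]$, so $h_2\le h_3$ forces $h_3\in\{160,165\}$, i.e. $q_3\in\{0,5\}$; similarly $\Sigma_4$ is built from $[45,180,270]$, so $h_3\le h_4$ leaves only a short list of values of $q_4$. The monotonicity $q_3\le q_4\le q_5\le q_6$ of Remark~\ref{remark:increase} discards every case with $q_3>q_4$, and since $d\ge 3$ (Lemma~\ref{lemma:divisibility}) the upper bound in $(\ref{eq:curve-4})$ gives $q_4-3q_3\le 119$, which together with the list of candidate $q_4$ forces $q_4\in\{0,45\}$. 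Both inequalities of $(\ref{eq:curve-4})$ and the bound $d\le 62$ of Corollary~\ref{corollary:degree-bounds} then confine $d$ to a short interval for each surviving pair $(q_3,q_4)$, while Remark~\ref{remark:mod-9} yields $h_5\equiv 0\mod 9$ (as $5$ is not divisible by $3$), pinning $d$ modulo $9$; after this only a handful of admissible $d$ remain in each case.

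For each surviving triple the decisive test is whether $h_5\in\Sigma_5$ and $h_6\in\Sigma_6$. Here I would use that a number $s$ is a partial sum of $\Delta_m$ if and only if the complement ${8+m\choose m}-s$ is a subset sum of the same multiset, so the test becomes: are $q_5$ and $q_6$ subset sums of the multisets of Lemma~\ref{lemma:splitting}? The $\Sigma_5$ test is a brief enumeration over the six parts $[36,90,135,216,270,540]$ and already kills most triples. For the remaining ones the $\Sigma_6$ test finishes the job; a convenient shortcut is that, after setting aside the three small parts $4,15,24$, every other part of $\Delta_6$ is divisible by $80$, so $q_6$ can be a subset sum only when $q_6$ modulo $80$ lies in the set $\{0,4,15,19,24,28,39,43\}$ of attainable small-part totals, and this single congruence disposes of the stubborn candidates.

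The main obstacle is organizational rather than conceptual: a fair number of triples survive the purely arithmetic constraints, and for each of them the subset-sum memberships in $\Sigma_5$ and especially $\Sigma_6$ must be verified individually, which is precisely the bookkeeping the authors entrust to a short computer program. Once it is carried out, the only triple meeting every condition is $d=36$, $q_3=0$, $q_4=45$, so that $h_3=165$ and $h_4=450$; then $q_5=126+4q_4-6q_3-d=270$ with $h_5=1017\in\Sigma_5$ and $h_6=2019\in\Sigma_6$, which gives the asserted $q_1=q_2=q_3=0$, $q_4=45$, $q_5=270$.
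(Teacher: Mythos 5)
Your proposal is correct and follows exactly the route the paper intends: the paper states this lemma without proof, describing it as a finite combinatorial search over Hilbert polynomials delegated to ``a simple computer program,'' and your reduction to triples $(d,q_3,q_4)$ with $q_5=126+4q_4-6q_3-d$, the narrowing via $\Sigma_3$, $\Sigma_4$, monotonicity, $(\ref{eq:curve-4})$ and divisibility, and the final $\Sigma_5$/$\Sigma_6$ subset-sum tests (including the mod~$80$ shortcut, which correctly eliminates the surviving case $d=24$, $q_3=5$, $q_4=45$ with $q_6=944\equiv 64$) is a sound hand-executable organization of that same search, terminating at the unique triple $(36,0,45)$.
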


\begin{lemma}\label{lemma:python-5}
If $\mathrm{H}(m)$ is a polynomial of degree $n=5$ with a leading
coefficient $d/n!$ with $d\leqslant 50$ and $d$ divisible by $3$,
such that the values $h_m=\mathrm{H}(m)$ are in $\Sigma_m$ for
$1\le m\le 9$, and the numbers $h_m$ and $q_m={8+m\choose m}-h_m$
satisfy the bounds of Remark~\ref{remark:increase},
$(\ref{eq:curve-5})$ and~$(\ref{eq:stupid-5})$, then $d=45$,
$q_1=q_2=q_3=q_4=q_5=0$, $q_6=39$, $q_7=270$.
\end{lemma}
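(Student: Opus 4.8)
The statement records the outcome of a finite search, so the plan is to set up that search explicitly, bound its size, and identify which inputs are load-bearing. The only representation-theoretic data entering the lemma are the sets $\Sigma_1,\dots,\Sigma_9$, so I would first precompute them: each $\Sigma_m$ is the set of subset sums of the multiset of dimensions $\Delta_m$ recorded in Lemma~\ref{lemma:splitting}. For example $\Sigma_1=\{0,9\}$, $\Sigma_2=\{0,45\}$, $\Sigma_3=\{0,5,160,165\}$, $\Sigma_4=\{0,45,180,225,270,315,450,495\}$, and the remaining $\Sigma_m$ are produced by the obvious subset-sum enumeration (respecting the stated multiplicities, e.g. the three copies of $240$ and three of $480$ in $\Delta_6$). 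These sets are what make the search finite.

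Next I would parametrise the candidate polynomials. A polynomial $\mathrm{H}$ of degree $5$ with leading coefficient $d/120$ is determined by the five values $h_1,\dots,h_5$ together with $d$, and all later values are then forced: the fifth forward difference of such a polynomial equals $5!\cdot\frac{d}{120}=d$, which yields the recursion
$$
h_m=d+5h_{m-1}-10h_{m-2}+10h_{m-3}-5h_{m-4}+h_{m-5}\qquad(m\ge 6).
$$
Thus the search ranges over $d\in\{3,6,\dots,48\}$ and $(h_1,\dots,h_5)\in\Sigma_1\times\dots\times\Sigma_5$, with $h_6,\dots,h_9$ computed from the recursion. For each candidate I would test, in order: membership $h_m\in\Sigma_m$ for $1\le m\le 9$; the monotonicity bounds $h_m\le h_{m+1}$ and $q_m\le q_{m+1}$ of Remark~\ref{remark:increase}, where $q_m={8+m\choose m}-h_m$; the inequality $(\ref{eq:stupid-5})$; and the inequality $(\ref{eq:curve-5})$. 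The claim is that exactly one candidate survives, the one with $h_1=9$, $h_2=45$, $h_3=165$, $h_4=495$, $h_5=1287$, $h_6=2964$, $h_7=6165$, for which $d=45$, $q_1=\dots=q_5=0$, $q_6=39$, and $q_7=270$; one checks directly that this tuple is consistent, e.g. $2964=3003-(15+24)\in\Sigma_6$ and the recursion returns $d=45$.

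To keep the enumeration honest and small I would prune greedily rather than form the full product. Since $\Sigma_1,\Sigma_2,\Sigma_3$ are tiny and Remark~\ref{remark:q1-q2} already forces $q_1=q_2=0$, one fixes $h_1=9$ and $h_2=45$ at the outset and discards any $h_3,h_4,h_5$ violating monotonicity or the $q$-inequalities before ever reaching $\Sigma_5$. The single genuinely large set, $\Sigma_9$, is never enumerated; it is only queried once per candidate to test membership of the already-determined value $h_9$. Consequently the number of tuples examined is at most $|\Sigma_1|\,|\Sigma_2|\,|\Sigma_3|\,|\Sigma_4|\,|\Sigma_5|$ times the number of admissible $d$, which is on the order of $10^5$ before pruning and far fewer after, so the verification terminates immediately.

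The one step that genuinely requires care — and where an error would be invisible — is the exact bookkeeping. The subset-sum sets $\Sigma_m$ must be generated faithfully from Lemma~\ref{lemma:splitting}, and the binomials ${8+m\choose m}$ together with the finite-difference recursion must be evaluated in exact integer arithmetic, so that no admissible polynomial is dropped and no spurious solution slips through. Given the correctness of Lemma~\ref{lemma:splitting}, everything else is a mechanical finite check, which is exactly why it is delegated to a short computer program.
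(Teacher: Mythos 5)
The paper offers no written proof of this lemma: it is explicitly ``left without proof'' and delegated to a computer enumeration of polynomials of degree $n$ with leading coefficient $d/n!$ whose values $h_m$ lie in $\Sigma_m$ and satisfy the stated inequalities, which is exactly the search you set up (with the correct finite-difference recursion $h_m=d+5h_{m-1}-10h_{m-2}+10h_{m-3}-5h_{m-4}+h_{m-5}$ and the correct unique survivor $h_1=9,\dots,h_7=6165$). Your proposal is therefore correct and takes essentially the same approach as the paper.
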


\begin{lemma}\label{lemma:python-6}
There are no polynomials $\mathrm{H}(m)$ of degree $n=6$ with a
leading coefficient $d/n!$ with $d\leqslant 27$ and $d$ divisible
by $9$, such that the values $h_m=\mathrm{H}(m)$ are in $\Sigma_m$
for $1\le m\le 9$, and the numbers $h_m$ and $q_m={8+m\choose
m}-h_m$ satisfy the bounds of Remark~\ref{remark:increase},
$(\ref{eq:curve-6})$ and~$(\ref{eq:stupid-6})$.
\end{lemma}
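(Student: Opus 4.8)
The statement is a finite check, and the plan is to organise the enumeration so that the representation-theoretic data of Lemma~\ref{lemma:splitting} eliminate every candidate polynomial. First I would pin down the forced values: since $q_1=q_2=q_3=0$ by Remarks~\ref{remark:q1-q2} and~\ref{remark:dim-6-small-q}, one has $h_1={9\choose 1}=9$, $h_2={10\choose 2}=45$ and $h_3={11\choose 3}=165$, while Lemma~\ref{lemma:divisibility} together with Corollary~\ref{corollary:degree-bounds} forces $d\in\{9,18,27\}$. A polynomial of degree $6$ is determined by the seven values $h_1,\ldots,h_7$, so once $h_1,h_2,h_3$ and $d$ are fixed the search runs over the triples $(h_4,h_5,h_6)$, with $h_7$ recovered from Lemma~\ref{lemma:binomial} via $d=h_7-6h_6+15h_5-20h_4+15h_3-6h_2+h_1$, and with $h_8,h_9$ then read off by extending the sixth-difference table. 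Each $h_m$ must lie in $\Sigma_m$, the sequence must be nondecreasing (Remark~\ref{remark:increase}), and $h_m\equiv 0\pmod 9$ for $m\in\{4,5,7,8\}$ by Remark~\ref{remark:mod-9}.

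The next step is to prune using the $q$-bounds. Writing $q_m={8+m\choose m}-h_m$, Corollary~\ref{corollary:qivj-stupid} gives $0\le q_5-4q_4<126$ and Corollary~\ref{corollary:curve} gives $0\le q_6-5q_5+10q_4\le 83-3d$ (recall $q_3=0$). Combined with Remark~\ref{remark:dim-6-small-q}, which says that $q_4>0$ forces $d=9$, these confine $(q_4,q_5,q_6)$ to very short windows. I would first dispose of $d=27$ and $d=18$, where necessarily $q_4=0$, i.e. $h_4=495$: then $(\ref{eq:stupid-6})$ leaves $q_5\in\{0,36,90\}$ and $(\ref{eq:curve-6})$ pins $q_6$ to an interval of length $83-3d\le 56$, so that reading the admissible $q_5,q_6$ off the partial sums of $\Delta_5,\Delta_6$ leaves only a handful of triples. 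For each survivor one computes $h_7=d+6h_6-15h_5+20h_4-2214$ and tests both $h_7\equiv 0\pmod 9$ and $h_7\in\Sigma_7$; a short calculation shows each candidate fails one of these or fails at $h_8$ (for example, when $q_4=q_5=q_6=0$ and $d=27$ one finds $h_7=6426\in\Sigma_7$ but $h_8=12798$, whose value $q_8=72$ is not a partial sum in $\Sigma_8$). Thus nothing survives for $d\in\{18,27\}$.

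The remaining and by far the largest case is $d=9$, where $q_4$ may be positive, so that $h_4\in\{180,225,270,315,450,495\}$, i.e. $q_4\in\{0,45,180,225,270,315\}$. For each value of $q_4$ the bound $(\ref{eq:stupid-6})$ selects the admissible $q_5\in[4q_4,\,4q_4+125]$ lying among the partial sums of $\Delta_5$, and then $(\ref{eq:curve-6})$ confines $q_6$ to $[5q_5-10q_4,\,5q_5-10q_4+56]$. This produces finitely many quadruples $(q_4,q_5,q_6,q_7)$, where $q_7$ is tied to the others by $q_7-6q_6+15q_5-20q_4=36-d$ (this is the $d=9$ constraint rewritten in the $q_m$, equivalently the value $q_7(V\cap\Pi_6)$ of the zero-dimensional section furnished by Lemma~\ref{lemma:qivj}). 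For each quadruple I would build the degree-$6$ polynomial, extend it to $h_8,h_9$, and test $h_8\in\Sigma_8$, $h_9\in\Sigma_9$ together with $h_8\equiv0\pmod9$ and monotonicity of the $q_m$.

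The main obstacle is exactly the size of this last enumeration: even after the curve, degree and $q$-bounds, the case $d=9$ retains enough triples $(h_4,h_5,h_6)$ that an exhaustive hand verification is impractical and error-prone, which is why it is delegated to a short computer program. The decisive filters are nonetheless simple and uniform: the congruences of Remark~\ref{remark:mod-9} applied to $h_7$ and $h_8$, and—most powerfully—the requirement that the forced high-degree values $h_8$ and $h_9$ be partial sums in $\Sigma_8$ and $\Sigma_9$. Because the gaps in these partial-sum sets are large compared with the slow growth $d\,m^6/720$ of a degree-$6$ Hilbert polynomial with $d\le 27$, one expects, and the computation confirms, that every candidate is excluded, so no such polynomial $\mathrm{H}(m)$ exists.
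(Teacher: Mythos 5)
Your proposal takes essentially the same route as the paper, which gives no written proof of this lemma and states only that the finite enumeration was delegated to a simple computer program: your organisation of that search (fixing $h_1,h_2,h_3$, iterating over $(h_4,h_5,h_6)$, recovering $h_7,h_8,h_9$ from the sixth-difference relation of Lemma~\ref{lemma:binomial}, and filtering by membership in $\Sigma_m$ together with the $q$-inequalities) is a correct implementation, and the sample values you exhibit (e.g. $h_7=6426$, $h_8=12798$, $q_8=72\notin\Sigma_8$ in the case $d=27$, $q_4=q_5=q_6=0$) check out. The only caveat is that you import $q_1=q_2=q_3=0$ and the implication ``$q_4>0\Rightarrow d=9$'' from Remarks~\ref{remark:q1-q2} and~\ref{remark:dim-6-small-q}, which are not in the lemma's own hypothesis list; since the paragraph preceding the four lemmas explicitly includes these conditions in the combinatorial problem (and they hold in the geometric situation where the lemma is applied), this is consistent with the intended statement and harmless for the proof of Theorem~\ref{theorem:main}.
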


Applying Lemmas~\ref{lemma:python-3}, \ref{lemma:python-4},
\ref{lemma:python-5}, and \ref{lemma:python-6} to the Hilbert
polynomial $H_{V}(m)$, we end up with the following two
possibilities: either $n=4$, $d=36$, $q_1=q_2=q_3=0$, $q_{4}=45$,
$q_5=270$, or $n=5$, $d=45$, $q_1=q_2=q_3=q_4=q_5=0$, $q_6=39$,
$q_7=270$.

\begin{remark}\label{remark:non-zero}
Let $W$ be a $G$-subrepresentation in
$H^0\big(\mathcal{I}_V\otimes\OO_{\P^{8}}(m)\big)$. Then there is
a natural map of $G$-representations $\psi\colon U^{\vee}\otimes
W\to H^0(\mathcal{I}_V\otimes\OO_{\P^{8}}(m+1))$, which is
obviously a non-zero map.
\end{remark}

Let us suppose that $n=4$. Then $q_4=45$, so that by
Lemma~\ref{lemma:splitting} there is an irreducible
$45$-dimensional $G$-subrepresentation $U_{45}\subset
H^0(\mathcal{I}_V\otimes\OO_{\P^{8}}(4))$. Thus, there is a
morphism of $G$-representations $\psi_5\colon U^{\vee}\otimes
U_{45}\to H^0(\mathcal{I}_V\otimes\OO_{\P^{8}}(5))$, which is a
non-zero map by Remark~\ref{remark:non-zero}. On the other hand,
$H^0\big(\mathcal{I}_V\otimes\OO_{\P^{8}}(5)\big)$ has no
$180$-dimensional $G$-subrepresentations by
Lemma~\ref{lemma:splitting}. Put
$q_5^{\prime}=\mathrm{dim}(\mathrm{Im}\psi_{5})$.  Keeping in mind
the splitting of $U^{\vee}\otimes U_{45}$ described in
Lemma~\ref{lemma:9x45}, we see that $q_5^{\prime}$ equals either
$90$, or $135$, or $225$. Since $q_5=270$, there must exist a
(possibly reducible) $G$-subrepresentation of
$H^0(\mathcal{I}_V\otimes\OO_{\P^{8}}(5))$ of dimension
$q_5-q_5^{\prime}$, i.\,e. of dimension $180$, $135$ and~$45$,
respectively. Neither of these cases is possible by
Lemma~\ref{lemma:splitting} (in particular, the second case is
impossible since
$H^0\big(\mathcal{I}_V\otimes\OO_{\P^{8}}(5)\big)$ contains a
unique $G$-invariant subspace of dimension $135$). Therefore, one
has $n\ne 4$.

Finally, we see that $n=5$. Then  $q_6=39$, so that by
Lemma~\ref{lemma:splitting} there is an irreducible
$24$-dimensional $G$-subrepresentation $U_{24}\subset
H^0(\mathcal{I}_V\otimes\OO_{\P^{8}}(6))$. Therefore there is a
morphism of $G$-representations $\psi_7\colon U^{\vee}\otimes
U_{24}\to H^0(\mathcal{I}_V\otimes\OO_{\P^{8}}(7))$, which is a
non-zero map by Remark~\ref{remark:non-zero}. Since
$U^{\vee}\otimes U_{24}$ is an irreducible $216$-dimensional
$G$-representation by Lemma~\ref{lemma:9x24}, we see that $\psi_7$
is injective. Since $q_7=270$, there must exist a (possibly
reducible) $G$-subrepresentation of
$H^0(\mathcal{I}_V\otimes\OO_{\P^{8}}(7))$ of dimension
$270-216=54$, which is impossible by Lemma~\ref{lemma:splitting}.
The obtained contradiction completes the proof of
Theorem~\ref{theorem:main}.

\medskip

This work was partially supported by the grants N.Sh.-4713.2010.1,
RFFI 11-01-00336-a, RFFI 11-01-92613-KO-a, RFFI 08-01-00395-a,
RFFI 11-01-00185-a, NSF DMS-1001427 and LMS grant 41007.

We would like to thank Michael Collins for a reference to
Theorem~\ref{theorem:Collins}. Special thanks goes to Andrey
Zavarnitsyn who explained us various facts from Group Theory and
provided us with the results of GAP computations used in the proof
of Theorem~\ref{theorem:sporadic}. The first author would like to
thank personally Selman Akbulut for inviting him to Eighteenth
Gokova Geometry and Topology conference (Gokova, Turkey).

\end{document}